\documentclass{amsart}
\usepackage{geometry}                % See geometry.pdf to learn the layout options. There are lots.
\geometry{letterpaper}                   % ... or a4paper or a5paper or ...
\usepackage{graphicx}
\usepackage{amssymb}
\usepackage{epstopdf,amsmath}

\newtheorem{prop}{Proposition}[section]
\newtheorem{lemma}[prop]{Lemma}
\newtheorem{cor}[prop]{Corollary}
\newtheorem{theorem}[prop]{Theorem}
\newtheorem*{theorem*}{Theorem}
\newtheorem{conj}[prop]{Conjecture}

\newtheorem{exam}[prop]{Example}
\newtheorem{ques}[prop]{Question}
\newtheorem{defn}[prop]{Definition}
\newtheorem{heur}[prop]{Heuristic}

\newcommand{\CC}{\mathbb{C}}
\newcommand{\eps}{\epsilon}

\newcommand{\ZZ}{\mathbb{Z}}
\newcommand{\RR}{\mathbb{R}}
\newcommand{\TT}{\mathbb{T}}
\newcommand{\supp}{\textrm{supp}}

\title{Decoupling estimates in Fourier analysis}

\author{Larry Guth}
\begin{document}

\begin{abstract} Decoupling is a recent development in Fourier analysis, which has applications in harmonic analysis, PDE, and number theory.  We survey some applications of decoupling and some of the ideas in the proof.  This survey is aimed at a general mathematical audience.  It is based on my 2022 ICM talk.
\end{abstract}

\maketitle

\section{Introduction}

Decoupling is a recent development in Fourier analysis, which has applications in harmonic analysis, PDE, and number theory.  To put it in context, let's start by recalling some basic ideas of Fourier analysis.  In Fourier analysis, we represent a function as a Fourier series or Fourier integral.  For instance, if $f: \RR^n \rightarrow \CC$ is a reasonably nice function, then we can write it as a Fourier integral:

\begin{equation} \label{fourierintegral} f(x) = \int_{\RR^n} \hat f(\omega) e^{2 \pi i \omega \cdot x} dx. \end{equation}

\noindent Here $\omega \cdot x$ is the dot product of $\omega$ and $x$, which we will also abbreviate as just $\omega x$.

Here are a couple reasons that it's useful to represent a function $f$ using a Fourier series or integral.  First, the functions $e^{2 \pi i \omega x}$ are eigenfunctions for the partial derivative operators $\partial x_j$.  This makes the Fourier representation interact well with partial derivatives, and it helps to study PDE.  Second, the functions $e^{2 \pi i \omega \cdot x}$ are eigenfunctions of the translation operator $T_v$ defined by $T_v f(x) = f(x + v)$.  This makes the Fourier representation useful in problems involving the translation structure of $\RR^n$, including problems in additive number theory.

But there is also a serious downside to representing a function $f$ as a Fourier series/integral.  To evaluate $f(x)$, we have to do an integral or a sum with many terms.  It often happens that the terms have various phases in the complex plane, and it's difficult to tell what happens when we add them all up.  In general, given some information about $\hat f$, it can be difficult to determine what that information has to say about $f$.  We will see some longstanding open questions of this flavor below.

Decoupling is helpful for estimating $\| f \|_{L^p}$ in terms of information about $\hat f$.  Now $ \| f \|_{L^2}$ is directly related to $\hat f$ because of orthogonality:  Plancherel's theorem states that

\begin{equation} \label{plancherel} \| f \|_{L^2} = \| \hat f \|_{L^2}.
\end{equation}

\noindent But for other values of $p$, it is much harder to connect $\| f \|_{L^p}$ with information about $\hat f$.  

Estimates for $\| f \|_{L^p}$ for $p \not= 2$ occur often in harmonic analysis, PDE, and analytic number theory.
You may wonder, if we have a good understanding of $ \| f \|_{L^2}$, what more do we learn by understanding $\| f \|_{L^p}$ for other values of $p$.  I like to think of this question in terms of super-level sets.  Define the super-level set $U_\lambda(f)$ by

\begin{equation} \label{defUlambda} U_\lambda(f) := \{ x : |f(x)| > \lambda \}. \end{equation}

\noindent We denote the volume of a set $U$ by $|U|$.  If we know $\| f \|_{L^p}$ for every $p$, we typically get accurate estimates for $|U_\lambda(f)|$ for every $\lambda$, which gives us basically all the possible information about how "big" the function $f$ is.  But if we only know $ \| f \|_{L^2}$, we get only limited information about $|U_\lambda(f)|$.

Other motivations for studying $\| f \|_{L^p}$ come from applications in PDE and analytic number theory.  In non-linear PDE, bounds involving $\| f \|_{L^p}$ are important for understanding how close a solution to a non-linear PDE is to a solution of a corresponding linear PDE.  In analytic number theory, the number of solutions to certain diophantine systems is equal to $\int |f|^p$ for a well-chosen function $f$ and exponent $p$.  These are just a couple samples among many applications for estimating $\| f \|_{L^p}$.

Decoupling is a new tool for estimating $\| f \|_{L^p}$ in terms of Fourier analytic information about $f$.  It was first formulated by Wolff in \cite{W4}, where he was able to prove sharp estimates for large values of $p$.  In \cite{BD}, Bourgain and Demeter proved sharp decoupling estimates for all $p$.  This breakthrough has led to solutions to problems in harmonic analysis that had seemed far out of reach a decade ago.  

In the next two subsections, we will introduce two main areas where decoupling has had an impact.  We will give examples of hard open problems and also examples of problems that were solved using decoupling.

\subsection{Restriction theory} \label{secrestr}

The Fourier representation of a function $f: \RR^n \rightarrow \CC$ is

$$f(x) = \int_{\RR^n} \hat f(\omega) e^{2 \pi i \omega \cdot x} d \omega. $$

There are two basic estimates connecting the $L^p$ norms of $f$ and the $L^p$ norms of $\hat f$:

\begin{itemize}

\item Orthogonality: $\| f \|_{L^2} = \| \hat f \|_{L^2}$.

\item Triangle inequality: $\| f \|_{L^\infty} \le \| \hat f \|_{L^1}$.

\end{itemize}

\noindent Interpolating between these gives the Hausdorff-Young inequality

\begin{equation} \label{HausdorffYoung} \| f \|_{L^p} \le \| \hat f \|_{L^q} \textrm{ if $1 \le p \le 2$ and $\frac{1}{q} = 1 - \frac{1}{p}$} \end{equation}

\noindent These are all of the $L^p$ type estimates for the Fourier transform operator.

If $\hat f$ is supported in a subset $\Omega \subset \RR^n$, we can write

$$f(x) = \int_{\Omega} \hat f(\omega) e^{2 \pi i \omega \cdot x} d \omega. $$

\noindent Restriction theory studies how the geometry of $\Omega$ relates to properties of $f$ such as $\| f \|_{L^p}$.  One of the most interesting cases is when $\hat f$ is supported in a compact submanifold $S \subset \RR^n$.  
In this case, the Fourier representation of $f$ has the form

\begin{equation} \label{defextension} f(x) = \int_S a(\omega) e^{2 \pi i \omega x} d \mu_S(\omega),\end{equation}

\noindent where $d \mu_S$ is the surface area measure of $S$.

Stein proposed studying $L^p$ estimates of the form 

\begin{equation} \label{extensionproblem} \| f \|_{L^p(\RR^n)}  \le C \| a \|_{L^q(S)} \end{equation}

\noindent Stein made the remarkable discovery that the estimates for the operator $E_S$ depend on the geometry of $S$.  If $S$ is a flat disk, then the only estimate of the form \ref{extensionproblem} is the triangle inequality: $\| f \|_{L^\infty} \le \| a \|_{L^1(S)}$.  But if $S$ is a curved surface, then there are more inequalities.  One central problem in the field is to understand all the $L^p$ inequalities of form \ref{extensionproblem} when $S$ is a curved hypersurface, like a paraboloid.  Let us write $P$ for the truncated paraboloid

\begin{equation} \label{defP} P := \{ \omega \in \RR^n | \omega_n = \sum_{j=1}^{n-1} \omega_j^2 \textrm{ and } \sum_{j=1}^{n-1} \omega_j^2 \le 1 \} \end{equation}

In this case, the Fourier representation of $f$ is

\begin{equation} \label{deffP} f(x) = \int_P a(\omega) e^{2 \pi i \omega x} d \mu_P(\omega).\end{equation}

\begin{exam} \label{bump} Suppose $a(\omega) = 1$ on $P$, and $f$ is given by (\ref{deffP}).

First note that $f(0) = \int_P d \mu_P$ is equal to the area of $P$, which is $\sim 1$.  When $x$ is large, there is a lot of cancellation in the integral (\ref{deffP}) coming from rapid oscillation of the function $e^{2 \pi i \omega x}$ as $\omega$ varies over $P$.  This effect can be estimated accurately using stationary phase, and one finds that 

$$ |f(x)|  \lesssim |x|^{- \frac{n-1}{2}}.$$

\noindent This bound is sharp for most $x$.  Therefore $\| f \|_{L^p(\RR^n)} < \infty$ if and only if $p > \frac{2n}{n-1}$.

\end{exam}

Stein conjectured that the same $L^p$ bounds hold whenever $|a(\omega)| \le 1$ for all $\omega$.

\begin{conj} \label{conjrestriction} (Restriction conjecture, \cite{Ste}) Suppose that $f$ has the form (\ref{deffP}) and that $|a(\omega)| \le 1$ for all $\omega \in P$.  If $p > \frac{2n}{n-1}$, then 

$$ \| f \|_{L^p(\RR^n)} \le C(p,n). $$

\end{conj}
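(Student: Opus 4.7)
The restriction conjecture remains famously open; my plan is thus a blueprint drawn from the state of the art rather than a complete route. The natural starting point is a scale-localized reformulation: fix a large radius $R$ and aim to show $\|f\|_{L^p(B_R)} \lesssim_\eps R^\eps$ whenever $|a| \le 1$ and $p > 2n/(n-1)$. Tao's $\eps$-removal lemma then upgrades this to the conjectured global bound on $\RR^n$.

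With $R$ fixed, I would decompose $P$ into caps $\tau$ of radius $R^{-1/2}$ and write $f = \sum_\tau E_\tau a$, where $E_\tau a(x) = \int_\tau a(\omega) e^{2\pi i \omega x} d\mu_P(\omega)$. Each $E_\tau a$ is essentially constant on tubes of dimensions $R^{1/2} \times \cdots \times R^{1/2} \times R$ oriented along the normal to $\tau$, so the problem reduces to controlling the $L^p$ norm of a superposition of such wave packets. Next, following Bourgain--Guth, I would perform a broad--narrow decomposition. At each point $x \in B_R$, either the significant caps span $k$ transverse directions (the \emph{broad} case), in which case the Bennett--Carbery--Tao multilinear restriction theorem yields sharp bounds; or they concentrate near a lower-dimensional algebraic variety (the \emph{narrow} case).

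For the narrow case, the plan is polynomial partitioning: choose a polynomial of controlled degree whose zero set $Z$ cuts $B_R$ into roughly balanced cells, apply the inductive hypothesis inside each cell, and treat the neighborhood of $Z$ by reducing to a restriction-type problem on the lower-dimensional variety $Z$. Iterating polynomial partitioning on the tangential contribution, together with the multilinear input in the broad case, should in principle close an induction on the scale $R$ and on the ambient dimension.

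The main obstacle, and the reason the conjecture is still open, is closing the induction at the sharp exponent. Polynomial partitioning currently delivers linear restriction only for exponents strictly larger than $2n/(n-1)$, because the analysis near $Z$ does not fully exploit the tangential concentration of wave packets there. A genuinely new geometric input — describing how $R^{1/2}$-tubes can align along algebraic varieties of intermediate dimension, beyond what Wolff-type Kakeya bounds and multilinear restriction presently supply — appears to be required, and decoupling alone is known to be insufficient. This is precisely where I expect the argument to stall, and where every current approach has stalled.
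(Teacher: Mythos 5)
There is nothing to compare here in the usual sense: the statement you were asked about is stated in the paper as a \emph{conjecture}, not a theorem. The paper offers no proof; it records that Fefferman settled the case $n=2$ and that the problem is open for every $n\ge 3$, and Section 5 is devoted to explaining why it is expected to be hard (via Fefferman's argument, restriction implies the Kakeya conjecture, which is itself open for $n\ge 3$, with the $\CC^3$ ``almost counterexample'' of Katz--Laba--Tao indicating how serious the obstruction is). So your submission is correctly calibrated: you did not claim a proof, and no proof exists to be reproduced.

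As a roadmap of the state of the art, what you wrote is essentially accurate: localization to $B_R$ plus $\eps$-removal, the wave packet decomposition at scale $R^{-1/2}$, the Bourgain--Guth broad/narrow dichotomy feeding the Bennett--Carbery--Tao multilinear restriction theorem, and polynomial partitioning with induction on scale and dimension is indeed the strongest known linear machinery, and it stalls exactly where you say it does: the exponents it yields are strictly worse than $\frac{2n}{n-1}$, and closing the gap seems to require new geometric information about how tubes can cluster along low-degree varieties. Two small calibration points. First, in the Bourgain--Guth scheme the narrow case is concentration near a lower-dimensional \emph{linear} subspace (handled by decoupling or induction on dimension); the algebraic variety enters later, through the partitioning of the broad/cellular analysis, so your description slightly conflates the two stages. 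Second, the paper's own framing of the obstruction is the Kakeya conjecture itself: since the restriction conjecture implies the $L^p$ Kakeya conjecture, any complete argument must in particular contain (or circumvent) a proof of sharp Kakeya-type overlap estimates for tubes in $\RR^n$, $n\ge 3$ --- which is consistent with, and sharpens, your closing remark that decoupling and current multilinear/partitioning inputs are insufficient. The only ``gap'' in your proposal is the one you named yourself, and it is the same gap the entire field faces.
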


Notice that the hypothesis that $f$ has the form (\ref{deffP}) with $|a(\omega)| \le 1$ for all $\omega$ is a hypothesis about $\hat f$.  The restriction conjecture asks what this information about $\hat f$ tells us about $\| f \|_{L^p}$.  
The 2-dimensional case of Conjecture \ref{conjrestriction} was proven by Fefferman in \cite{F1}.  But for dimension $n \ge 3$, the conjecture remains open after intensive work by many people.  In Section \ref{seckakeya}, we will discuss some reasons the problem is so difficult.  

In Conjecture \ref{conjrestriction}, we considered the bound $\| a(\omega) \|_{L^\infty} \le 1$.  Bounds of the form $\| a \|_{L^q(P)}$ are also interesting for other $q$.  The case $q=2$ is the most important, and it was completely worked out by  Strichartz \cite{Str} following work by Tomas and Stein.  It has turned out to be important in PDE.  It reads as follows.

\begin{theorem} \label{strichartz} (Strichartz inequality, \cite{Str}) Suppose that $f$ has the form (\ref{deffP}.  If $p \ge \frac{2 (n+1)}{n-1}$, then 

$ \| f \|_{L^p(\RR^n)} \le C(n) \| a(\omega) \|_{L^2(P)}. $

\end{theorem}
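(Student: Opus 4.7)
The plan is the Tomas--Stein $TT^*$ argument. Let $E: L^2(P) \to L^p(\RR^n)$ denote the extension operator $Ea = f$ from (\ref{deffP}); its formal adjoint is the Fourier restriction operator $E^* g = \hat g|_P$. The $TT^*$ identity gives $\|E\|_{L^2 \to L^p}^2 = \|EE^*\|_{L^{p'} \to L^p}$, and a direct computation shows $EE^* g = K * g$, where $K := \widehat{d\mu_P}$. So it suffices to show that convolution with $K$ maps $L^{p'}(\RR^n)$ to $L^p(\RR^n)$ for $p \geq \frac{2(n+1)}{n-1}$.

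The two crucial facts about $K$ are: (i) the stationary phase bound $|K(x)| \lesssim (1+|x|)^{-(n-1)/2}$, essentially the estimate already recorded in Example \ref{bump}, and (ii) $\hat K = d\mu_P$ is a finite measure concentrated on the paraboloid $P$. I would decompose $K = \sum_{k \geq 0} K_k$ using a smooth dyadic partition of unity in $x$, with $K_k$ supported in $\{|x| \sim 2^k\}$ for $k \geq 1$. From (i), $\|K_k\|_\infty \lesssim 2^{-k(n-1)/2}$, so Young's inequality gives the $L^1 \to L^\infty$ bound $\|K_k * g\|_\infty \lesssim 2^{-k(n-1)/2}\|g\|_1$. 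From (ii), $\hat{K_k}$ is $d\mu_P$ convolved with a smooth bump at scale $2^{-k}$, so $\|\hat{K_k}\|_\infty \lesssim 2^k$, and Plancherel gives the $L^2 \to L^2$ bound $\|K_k * g\|_2 \lesssim 2^k \|g\|_2$.

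Interpolating between these two estimates at the dual pair $(p', p)$ yields
\begin{equation}
\|K_k * g\|_{L^p} \lesssim 2^{k \beta(p)} \|g\|_{L^{p'}}, \qquad \beta(p) := \frac{n+1}{p} - \frac{n-1}{2}.
\end{equation}
For $p > \frac{2(n+1)}{n-1}$ we have $\beta(p) < 0$, so $\sum_k 2^{k \beta(p)}$ converges and summing the dyadic pieces gives $\|K * g\|_{L^p} \lesssim \|g\|_{L^{p'}}$.

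The main obstacle is the endpoint $p = \frac{2(n+1)}{n-1}$, where $\beta = 0$ and the geometric series diverges logarithmically. To handle it I would invoke Stein's complex interpolation theorem applied to an analytic family $\{T_z\}_{z \in \CC}$ whose kernels interpolate $K$, obtained for instance by replacing $d\mu_P$ with a suitably normalized holomorphic family of powers of the distance to $P$. One arranges that $T_z$ is bounded $L^1 \to L^\infty$ on one vertical line (via pointwise decay), bounded $L^2 \to L^2$ on another (via Plancherel applied to the Fourier multiplier), and that $z = 0$ recovers convolution with $K$; Stein's theorem then delivers the endpoint bound. The delicate technical point is controlling the growth of the operator norm in $\Im z$, which must be admissible (sub-exponential) for the complex interpolation to apply.
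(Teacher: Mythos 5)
Your proposal is correct: it is the standard Tomas--Stein $TT^*$ argument (dyadic decomposition of $\widehat{d\mu_P}$, interpolation of the $L^1\to L^\infty$ stationary-phase bound against the $L^2\to L^2$ multiplier bound, and Stein's analytic-family interpolation with Gamma-factor normalization at the endpoint $p=\frac{2(n+1)}{n-1}$), which is precisely the classical proof behind the result cited here as \cite{Str}; the survey itself gives no proof, so there is nothing further to compare. The only part left at sketch level is the admissible-growth verification for the analytic family at the endpoint, and that is exactly where the standard argument places the technical work.
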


This theorem plays an important role in the study of the Schrodinger equation.  Recall that the linear Schodinger equation for a function $u(x,t)$ with $x \in \RR^{d}$ and $t \in \RR$ is 

\begin{equation} \label{schrodeq} \partial_t u = i \sum_{j=1}^d \partial^2_{x_j} u \end{equation}

If $u$ obeys the linear Schrodinger equation, then $\hat u$ is a distribution supported on the paraboloid, and so the Strichartz estimate can be used to understand $\| u \|_{L^p}$.  Theorem \ref{strichartz} tells us that for any solution of the linear Schrodinger equation (\ref{schrodeq}) with initial data $u(x,0) = u_0(x)$, 

\begin{equation} \label{strichschrod} \| u \|_{L^{\frac{2(d+2)}{d}}(\RR^d \times \RR)} \le C \| u_0 \|_{L^2(\RR^d)}. \end{equation}

This theorem has played a central role in PDE, especially in non-linear PDE.  The $L^2$ norm on the right-hand side is important in PDE because $\| u_0 \|_{L^2} = \| \hat u_0 \|_{L^2}$ and also $\| u_0 \|_{L^2} = \| u( y, t) \|_{L^2_y}$ for every $t$.  In non-linear PDE, it leads to sharp estimates about when the solution to a non-linear PDE is close to the solution of the corresponding linear PDE.

The Strichartz estimate describes a spreading out effect.  To get a sense of it, first suppose that $u_0$ is a smooth bump concentrated on a ball in space time.  As $t$ increases, the function $u(x,t)$ spreads out and gets smaller.  As it does so, $\int_{\RR^d} |u(x,t)|^2 dx$ remains constant, and $\int_{\RR^d} |u(x,t)|^p dx$ gets smaller for any $p > 2$.  Because of this spreading out effect, $\int_{\RR^d \times \RR} |u(x,t)|^{\frac{2(d+2)}{d}} dx dt$ is finite.

The exponent $\frac{2(d+2)}{d}$ is the only exponent for which (\ref{strichschrod}) holds.  To see what is special about this exponent,  it helps me to translate the Strichartz estimate into an estimate for superlevel sets.  Let $U_\lambda(u) := \{(x,t) \in \RR^d \times \RR: |u(x,t)| > \lambda \}$.  The Strichartz inequality implies that if $\| u_0 \|_{L^2(\RR^d)} = 1$, then

$$ | U_\lambda(u) | \le C \lambda^{-\frac{2(d+2)}{d}} . $$

\noindent This estimate is sharp: for any choice of $\lambda$ we can find initial data $u_0$ with $\| u_0 \|_{L^2(\RR^d)} = 1$ so that the solution of the Schrodinger equation has $|U_\lambda(u)| \ge c \lambda^{-\frac{2(d+2)}{d}}$.

It's also worth mentioning that the choice of the paraboloid in this discussion is just one interesting example.  There are similar theorems and conjectures for other surfaces, such as the sphere and the cone, and these help to study other PDE, such as the Laplace eigenfunction equation $\triangle u = \lambda u$ and the wave equation.

One striking application of decoupling involves Strichartz estimates on flat tori.  The Schrodinger equation makes sense on any Riemannian manifold, and for each manifold we can ask for the best inequality in the spirit of (\ref{strichschrod}).  Understanding the Strichartz estimates on closed manifolds is extremely difficult.  It is known that different closed manifolds behave quite differently from each other -- for example, round spheres behave differently from flat tori.  But very few examples are understood.  Before decoupling, sharp Strichartz estimates were only known for $S^1$ and $S^1 \times S^1$ (by Bourgain in the 90s \cite{B11}) and $S^3$ (by Burq-Gerard-Tzvetkov \cite{BGT}).  In all these examples, the value of the exponent $p$ is an even integer, and we will discuss in Subsection \ref{analnumb} why this is important.

The simplest flat torus in the unit cube torus $\RR^d / \ZZ^d$.  A solution to the Schrodinger equation on the unit cube torus is just a solution $u(x,t)$ on $\RR^d \times \RR$ which is $\ZZ^d$-periodic in the $x$ variable.  Any such solution can be written in the form

\begin{equation} \label{defusp} u(x,t) = \sum_{n \in \ZZ^d} a_n e^{2 \pi i (n \cdot x + |n|^2 t)}. \end{equation}

\noindent Notice that this Fourier representation is analogous to (\ref{deffP}), except that the integral in ({\ref{deffP}) is replaced by a sum.  
We say that $u$ has ``frequency at most $N$" if the coefficients $a_n$ are supported in the cube $Q_N : = \{ (n_1, ...., n_d) \in \ZZ^d: |n_j| \le N \textrm{ for all } j \}$.  

\begin{exam} \label{fundtorus}  Suppose that $u$ is given by (\ref{defusp}) where $a_n = 1$ if $n \in Q_N$ and $a_n = 0$ otherwise.  In other words

$$ u(x,t) = \sum_{n \in Q_N}  e^{2 \pi i (n \cdot x + |n|^2 t)}. $$

First note that $u(0,0) = |Q_N| \sim N^d$.  We have $|u(x,t)| \sim N^d$ when $|x| \le \frac{1}{10 d N}$ and $|t| \le \frac{1}{10 d N^2}$, because then each term in the sum is almost 1.  As $x$ and $t$ increase, we get cancellation in the sum coming from oscillations in $e^{2 \pi i (n \cdot x + |n|^2 t)}$.  So far this behavior is similar to Example \ref{bump}.  

However, in the torus case, $|u(x,t)|$ is also large when $(x,t)$ lies near to a rational point of the form $(\frac{p_1}{q}, ... , \frac{p_d}{q}, \frac{p_t}{q})$.  Taking account of all these peaks near rational points, it turns out that $U_\lambda (u) \cap [0,1]^{d+1}$ has volume $\sim N^{d+2} \lambda^{-\frac{2(d+2)}{d}}$ for all $\lambda$ in the range $N^{d/2} \le \lambda \le N^d$.  (This range includes all interesting values of $\lambda$.)

\end{exam}

A natural analogue of the restriction conjecture in the periodic setting would say

\begin{conj} \label{discreterestriction} Suppose that $u$ is given by (\ref{defusp}) and that $|a_n| \le 1$ for all $n \in Q_N$ and $a_n = 0$ for $n \notin Q_N$.  Then $|U_\lambda(u) \cap [0,1]^{d+1} | \le C(d, \eps) N^{d+2 + \eps} \lambda^{-\frac{2(d+2)}{d}}$ for all $\lambda$ in the range $N^{d/2} \le \lambda \le N^d$.
\end{conj}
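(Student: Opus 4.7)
The plan is to deduce the conjecture from the Bourgain--Demeter $\ell^2$-decoupling theorem for the paraboloid, which I would take as a black box. The deduction has three steps: convert the super-level set statement to an $L^p$ bound via Chebyshev, apply decoupling at the critical exponent $p = \frac{2(d+2)}{d}$, and use that at the natural scale each decoupling cap contains exactly one frequency $(n,|n|^2)$.

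The first step is to show $\|u\|_{L^p([0,1]^{d+1})} \le C_\eps N^{d/2+\eps}$ with $p = \frac{2(d+2)}{d}$. Granting this, Chebyshev's inequality yields
\[ |U_\lambda(u) \cap [0,1]^{d+1}| \le \lambda^{-p} \|u\|_{L^p}^p \le C_\eps N^{d+2+\eps'} \lambda^{-\frac{2(d+2)}{d}}, \]
which is the target (after relabelling $\eps$). The stated range $N^{d/2} \le \lambda \le N^d$ is precisely the range in which this bound is nontrivial: for smaller $\lambda$ the trivial bound $|U_\lambda \cap [0,1]^{d+1}| \le 1$ wins, and for larger $\lambda$ the set $U_\lambda$ is empty since $\|u\|_\infty \le N^d$.

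For the $L^p$ estimate I would use decoupling. Each frequency $(n,|n|^2)$ with $n \in Q_N$ lies on the paraboloid in $\RR^{d+1}$, with distinct points separated by unit distance. Rescaling by $N$ in frequency (equivalently by $N^{-1}$ in $x$ and $N^{-2}$ in $t$), these lie on a unit paraboloid at spacing $1/N$, which is the natural scale for decoupling on a ball of radius $N^2$. Using periodicity of $u$ in $x$ to transfer from $B_{N^2}$ back to $[0,1]^{d+1}$, together with translation in $t$ (which preserves $|a_n|$) to localize the $t$-variable to a unit interval, the Bourgain--Demeter theorem at exponent $p$ gives
\[ \|u\|_{L^p([0,1]^{d+1})} \le C_\eps N^\eps \Bigl(\sum_{n \in Q_N} \|u_n\|_{L^p([0,1]^{d+1})}^2\Bigr)^{1/2}, \]
where $u_n(x,t) := a_n e^{2\pi i(n\cdot x + |n|^2 t)}$. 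Since $|u_n| \equiv |a_n|$, one has $\|u_n\|_{L^p([0,1]^{d+1})} = |a_n| \le 1$; together with $|Q_N| \lesssim N^d$ this gives the desired $\|u\|_{L^p} \le C_\eps N^{d/2+\eps}$.

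The hard step is entirely contained in the Bourgain--Demeter theorem, which I have used as a black box --- its proof requires induction on scales, a multilinear Kakeya-type geometric input, and the full machinery of \cite{BD}. The only technical wrinkle in the reduction above is the passage from the continuous decoupling inequality (naturally measured on $B_{N^2}$ with Schwartz weights) to the periodic setting on $[0,1]^{d+1}$; this passage is standard and uses periodicity in $x$ and translation-invariance in $t$ as indicated, with no loss beyond a subpolynomial factor that can be absorbed into $N^\eps$.
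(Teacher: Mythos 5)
Your proposal is correct and matches the paper's route: the paper obtains Conjecture \ref{discreterestriction} exactly as a corollary of the Bourgain--Demeter periodic Strichartz estimate (Theorem \ref{strichtorus}), which is what your Chebyshev argument with $\|a_n\|_{\ell^2} \lesssim N^{d/2}$ amounts to, with the decoupling theorem used as the black box supplying that estimate. Nothing further is needed.
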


\noindent This conjecture used to sound to me just as hard as the restriction conjecture or maybe harder.  The set up is similar.  And Example \ref{fundtorus} in this periodic setting is more intricate and complex than Example \ref{bump} in the setting of the original restriction conjecture.  However, Bourgain and Demeter proved this conjecture as a corollary of their sharp Strichartz estimate on tori.  This theorem is one of the first applications of decoupling.

\begin{theorem} \label{strichtorus} (Bourgain and Demeter \cite{BD}) Suppose that $u$ is given by (\ref{defusp}) and that $a_n$ is supported in $Q_N$.  Then

\begin{equation} \label{perstrich} \| u \|_{L^{\frac{2(d+2)}{d}}([0,1]^{d+1})} \le  C(d, \eps) N^\eps \| a_n \|_{\ell^2}. \end{equation}

\end{theorem}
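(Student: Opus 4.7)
The strategy is to derive this discrete/periodic Strichartz estimate from the sharp $\ell^2$ decoupling theorem of Bourgain--Demeter for the paraboloid, viewed as a continuous inequality on $\RR^{d+1}$. Recall that decoupling asserts: if $g : \RR^{d+1} \to \CC$ has Fourier transform supported in a $1/R$-neighborhood of the truncated paraboloid, and $g = \sum_\theta g_\theta$ is the decomposition into pieces whose Fourier transforms live in caps $\theta$ of horizontal width $R^{-1/2}$, then at the critical exponent $p = 2(d+2)/d$,
$$\|g\|_{L^p(B_R)} \;\le\; C_\eps R^\eps \Bigl( \sum_\theta \|g_\theta\|_{L^p(w_{B_R})}^2 \Bigr)^{1/2},$$
where $w_{B_R}$ is a weight essentially equal to $1$ on $B_R$ and rapidly decaying outside.

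The bridge to the periodic setting is a parabolic rescaling. Define $v(y,s) := u(y/N, s/N^2)$, so that
$$v(y,s) \;=\; \sum_{n \in Q_N} a_n\, e^{2\pi i (\omega_n \cdot y + |\omega_n|^2 s)}, \qquad \omega_n := n/N.$$
The frequencies $(\omega_n, |\omega_n|^2)$ now lie exactly on the paraboloid and form a $1/N$-separated set. Choose $R := N^2$, so the decoupling cap-width $R^{-1/2} = 1/N$ matches this separation and each cap carries at most one frequency. The corresponding cap piece is then the single exponential $v_n = a_n e^{2\pi i(\omega_n \cdot y + |\omega_n|^2 s)}$, which satisfies $|v_n| \equiv |a_n|$, so $\|v_n\|_{L^p(w_{B_R})} \sim |a_n|\, |B_R|^{1/p}$. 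The decoupling inequality therefore collapses to
$$\|v\|_{L^p(B_R)} \;\lesssim_\eps\; N^{2\eps}\, |B_R|^{1/p}\, \|a\|_{\ell^2} \;\sim\; N^{2\eps}\, N^{2(d+1)/p}\, \|a\|_{\ell^2}.$$

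To finish, relate $\|v\|_{L^p(B_R)}$ to the periodic norm $\|u\|_{L^p([0,1]^{d+1})}$. Under the change of variables $x = y/N$, $t = s/N^2$, the ball $B_{N^2}$ in $(y,s)$-space becomes an ellipsoid of volume $\sim N^d$ in $(x,t)$-space, which by $\ZZ^{d+1}$-periodicity of $u$ covers $\sim N^d$ fundamental periods; a direct computation gives $\|v\|_{L^p(B_R)} \sim N^{(d+2)/p} N^{d/p} \|u\|_{L^p([0,1]^{d+1})} = N^{(2d+2)/p} \|u\|_{L^p([0,1]^{d+1})}$. Since $2d+2 = 2(d+1)$, the $N$-powers on the two sides match exactly, leaving $\|u\|_{L^p([0,1]^{d+1})} \lesssim N^{2\eps} \|a\|_{\ell^2}$, which is the desired estimate after renaming $\eps$. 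The main obstacle, of course, is the $\ell^2$ decoupling theorem itself; the rescaling above is essentially change-of-variables bookkeeping once that input is in hand. Decoupling requires an intricate induction on scales that interpolates between $L^2$ orthogonality (cheap at low $p$) and the Bennett--Carbery--Tao multilinear Kakeya/restriction estimates (which encode the transversality of normals at different points of the curved paraboloid), and this is where the real depth of the theorem lies.
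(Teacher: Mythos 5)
Your proposal is correct and follows essentially the same route the paper sketches: apply the Bourgain--Demeter $\ell^2$ decoupling for the paraboloid in $\RR^{d+1}$ (in its local, weighted form on a ball of radius $R = N^2$, where each cap of width $N^{-1}$ contains a single exponential), then undo the parabolic rescaling and use $\ZZ^{d+1}$-periodicity of $u$ to convert the ball estimate into the estimate on $[0,1]^{d+1}$; your exponent bookkeeping checks out. The only caveats are routine ones the paper also flags as "technical work": the passage from global to weighted local decoupling, and the fact that $\omega_n \in [-1,1]^d$ requires covering the enlarged paraboloid by boundedly many affine copies of the truncated one, both of which only affect constants.
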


Notice that if $u_0(x) = u(x,0)$, then $\| a_n \|_{\ell^2} = \|u_0 \|_{L^2([0,1]^d)}$, so this inequality is very similar to the Strichartz inequality for the Schrodinger equation on $\RR^d$ recorded in (\ref{strichschrod}).

To finish this section, let us try to roughly indicate why the Strichartz inequality on the torus is much harder than the Strichartz inequality on $\RR^d$.  Recall that the Strichartz inequality encodes a spreading out effect.  First imagine a solution $u(x,t)$ on Euclidean space, and suppose that the initial data $u_0$ is concentrated in a very small ball.  As time increases, the solution $u(x,t)$ spreads out.  At a small time $t_0$, the solution is spread over a unit ball.  In Euclidean space, it can continue to spread out in all directions indefinitely.  The proof of Strichartz estimates this effect in a quantitative way.  

Now let $u_P$ be the solution on the torus with the same initial data $u_0$.  The function $u_P$ is given by periodizing $u$:

\begin{equation} \label{uP} u_P(x,t) = \sum_{z \in \ZZ^d} u(x + z, t). \end{equation}

\noindent For times up to $t_0$, $u(x,t)$ is supported on a unit ball in $x$ variable, and so $u_P(x,t) = u(x,t)$.  But beyond this time, $u(x,t)$ is spread over a much bigger ball, and there are many non-zero terms in the sum
(\ref{uP}).  If we visualize $u_P(x,t)$, the solution starts to wrap around the torus.  Different pieces of the solution, which have traveled around the torus in different ways, get added up, and we have to prove that there is a lot of cancellation in that sum.  

Before decoupling, Theorem \ref{strichtorus} was known for $d=1,2$ only because of a connection with number theory.  In the next section, we describe some connections between Fourier analysis and number theory and we will flesh this out.

\subsection{Analytic number theory} \label{analnumb}

When $p$ is an even integer, $L^p$ estimates have a special interpretation which connects them with problems in additive number theory.

Suppose that $A \subset \ZZ^d$ is a finite set.  We define $E_s(A)$ (the additive $s$-energy of $A$) by

\begin{equation} \label{saddenergy} E_s(A) := \# \{ (a_1, ..., a_s, b_1, ..., b_s) \in A^{2s} : a_1 + ... + a_s = b_1 + ... + b_s \}. \end{equation}

For each $A$, we can also define a function $f_A(x)$ with Fourier series

\begin{equation} \label{fA} f_A(x) = \sum_{a \in A} e^{2 \pi i a \cdot x}. \end{equation}

The function $f_A: \RR^d \rightarrow \CC$ is $\ZZ^d$ periodic because $A \subset \ZZ^d$ and so each function $e^{2 \pi i a \cdot x}$ is $\ZZ^d$-periodic.

\begin{lemma} \label{eventrick} For any finite set $A \subset \ZZ^d$,

$$ \int_{[0,1]^d} |f_A(x)|^{2s} dx = E_s(A). $$
\end{lemma}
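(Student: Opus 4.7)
The plan is to exploit the fact that the integer exponent $2s$ allows $|f_A|^{2s}$ to be expanded as a polynomial in the characters $e^{2\pi i a \cdot x}$, and then to invoke orthogonality of characters on the torus $[0,1]^d$.

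First I would write $|f_A(x)|^{2s} = f_A(x)^s \, \overline{f_A(x)}^s$ and use the identity $\overline{e^{2\pi i a \cdot x}} = e^{-2\pi i a \cdot x}$ together with the definition \eqref{fA} to expand into a multiple sum:
\begin{equation*}
|f_A(x)|^{2s} = \sum_{a_1, \ldots, a_s, b_1, \ldots, b_s \in A} e^{2\pi i (a_1 + \cdots + a_s - b_1 - \cdots - b_s) \cdot x}.
\end{equation*}
Every index tuple in the sum is an element of $A^{2s}$, so the sum is indexed by exactly the same set that appears in the definition \eqref{saddenergy} of $E_s(A)$.

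Next I would integrate term by term over $[0,1]^d$ and apply the standard orthogonality relation
\begin{equation*}
\int_{[0,1]^d} e^{2\pi i m \cdot x} \, dx = \begin{cases} 1 & \text{if } m = 0, \\ 0 & \text{if } m \in \ZZ^d \setminus \{0\}, \end{cases}
\end{equation*}
which is immediate by separating variables and integrating each coordinate factor. Since $a_1 + \cdots + a_s - b_1 - \cdots - b_s$ always lies in $\ZZ^d$, each term contributes either $1$ or $0$, and it contributes $1$ precisely when $a_1 + \cdots + a_s = b_1 + \cdots + b_s$. Summing gives the number of such tuples, which is exactly $E_s(A)$ by \eqref{saddenergy}.

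There is essentially no obstacle: the only things that need checking are that the finite expansion is legitimate (which is automatic because $A$ is finite, so $f_A$ is a trigonometric polynomial and all sums are finite), and the orthogonality relation for integer characters on the unit cube. The proof would not extend to non-integer $p$ or non-integer lattices, which is precisely the reason that, as the excerpt notes, even integer exponents $p = 2s$ have a privileged status in this subject.
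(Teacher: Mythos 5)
Your proposal is correct and follows essentially the same route as the paper's proof sketch: expand $|f_A|^{2s} = f_A^s \bar f_A^s$ into a finite sum of characters, integrate term by term, and use the orthogonality relation $\int_{[0,1]^d} e^{2\pi i m \cdot x}\,dx = 1$ if $m=0$ and $0$ otherwise for $m \in \ZZ^d$ to see that exactly the tuples counted by $E_s(A)$ survive. The extra remarks about finiteness of the expansion and the failure for non-integer exponents are fine but not needed.
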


\begin{proof} [Proof sketch] We expand out the integral on the LHS.

$$ \int_{[0,1]^d} |f_A(x)|^{2s} dx = \int_{[0,1]^d} f_A^s \bar f_A^s dx = \int_{[0,1]^d} \sum_{a_1, ..., a_s, b_1, ..., b_s \in A} e^{2 \pi i (a_1 + ... + a_s - b_1 - ... - b_s) x} dx.$$

Now if $m \in \ZZ^d$, then $\int_{[0,1]^d} e^{2 \pi i m \cdot x} dx$ is 1 if $m = 0$ and 0 otherwise.  And so the only terms that contribute to the integral above are terms where $a_1 + ... + a_s - b_1 - ... - b_s = 0$.  So the last integral is $E_s(A)$.

\end{proof}

For instance, if $A_{k,N} := \{ 1^k, 2^k, ..., N^k \} \subset \ZZ$ then

\begin{equation} E_s(A_{k,N}) = \# \textrm{ of solutions to } a_1^k + ... + a_s^k = b_1^k + ... + b_s^k, \textrm{ with $a_j, b_j \in \ZZ, 1 \le a_j, b_j \le N$}. \end{equation}

In this case, the relevant function $f$ is 

\begin{equation} \label{fkN} f_{k,N} (x) = \sum_{a=1}^N e^{2 \pi i a^k x}, \end{equation}

\noindent and Lemma \ref{eventrick} tells us that 

\begin{equation} \label{eventrickkN} \int_0^1 |f_{k,N}(x)|^{2s} dx = E_s(A_{k,N}). \end{equation}

Lemma \ref{eventrick} tells us that a certain $L^p$ norm is equal to the number of solutions to a certain diophantine equation.  The lemma
is useful in both directions.  If we know something about the number of solutions to the diophantine equation, then we can get information about the $L^p$ norm.  If we know something about the $L^p$ norm, then we can get information about the number of solutions to the diophantine equation.

For instance, consider the diophantine equation $a_1^2 + a_2^2 = b_1^2 + b_2^2$, with $a_i, b_i$ between 1 and $N$.  First let us estimate the number of solution directly.  Rearranging we get $a_1^2 - b_1^2 = b_2^2 - a_2^2$, and factoring one side we see that 

$$ (a_1 + b_1) (a_1 - b_1) = b_2^2 - a_2^2. $$

\noindent If we fix $a_2, b_2$, then the number of $(a_1, b_1)$ solving this equation depends on the number of factors of $b_2^2 - a_2^2$.  Because of unique factorization, the number of different factors of an integer $M$ is fairly small, at most $C_\eps M^\eps$ for any $\eps > 0$.  Using this, we see that the number of integer solutions to $a_1^2 + a_2^2 = b_1^2 + b_2^2$ with $1 \le a_j, b_j \le N$ is at most $C_\eps N^{2 + \eps}$.  Lemma \ref{eventrick} tells us that the number of solutions is equal to $\int_0^1 |f_{2,N}(x)|^4 dx$, and so we conclude that this integral is bounded by $C_\eps N^{2 + \eps}. $

On the other hand, Weyl used the differencing method to give pointwise estimates for the function $f_{2,N}$.  These estimates imply that $\int_0^1 |f_{2,N}(x)|^4 dx \le C_\eps N^{2 + \eps}$ which then gives an analytic proof that the number of integer solutions to $a_1^2 + a_2^2 = b_1^2 + b_2^2$ with $1 \le a_j, b_j \le N$ is at most $C_\eps N^{2 + \eps}$.

Hardy and Littlewood made a conjecture that generalizes these estimates from squares to higher powers.

\begin{conj} \label{HypK*} (Hardy and Littlewood) For any $k \ge 2$, $E_k(A_{k,N}) \le C_\eps N^{k + \eps}$.  Equivalently,

$$ \int_0^1 |f_{k,N}(x)|^{2k} dx \le C_\eps N^{k + \eps}. $$

\end{conj}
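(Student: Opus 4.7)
Because the conjecture concerns a single equation (matching only the $k$-th moments) while decoupling most directly controls the joint Vinogradov system (all moments $1$ through $k$ simultaneously), the plan is to lift and then descend. Introduce the $k$-variable exponential sum
$$F(y_1,\dots,y_k)=\sum_{a=1}^{N}e^{2\pi i(ay_1+a^2y_2+\dots+a^ky_k)},$$
whose frequencies lie exactly on the moment curve $\gamma(t)=(t,t^2,\dots,t^k)\subset\RR^k$; observe that $f_{k,N}(x)=F(0,\dots,0,x)$, so the target integral is a one-dimensional slice of $|F|^{2k}$.

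The first step is to apply the Bourgain--Demeter--Guth decoupling theorem for $\gamma$. The same orthogonality argument as in Lemma \ref{eventrick} identifies $\int_{[0,1]^k}|F|^{2s}\,dy$ with the Vinogradov mean value $J_{s,k}(N)$, counting $(2s)$-tuples $(a_1,\ldots,a_s,b_1,\ldots,b_s)$ satisfying $\sum a_i^j=\sum b_i^j$ simultaneously for every $j=1,\ldots,k$. Sharp decoupling for the moment curve at the critical exponent $p=k(k+1)$ gives $J_{k(k+1)/2,k}(N)\lesssim_\eps N^{k(k+1)/2+\eps}$, and interpolating against the trivial $L^2$ bound $J_{s,k}(N)\le N^s$ yields $J_{s,k}(N)\lesssim_\eps N^{s+\eps}$ for every $s\le k(k+1)/2$. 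In particular $J_{k,k}(N)\lesssim_\eps N^{k+\eps}$, which is the Vinogradov-system analogue of the desired bound.

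The second step, which I expect to be the main obstacle, is to pass from this average bound over $[0,1]^k$ to the single-slice bound $\int_0^1|F(0,\dots,0,x)|^{2k}\,dx=E_k(A_{k,N})$. The trivial inequality $J_{k,k}(N)\le E_k(A_{k,N})$ goes the wrong way, because forgetting the lower-moment constraints only enlarges the solution set. A natural plan is a Hardy--Littlewood circle-method dissection of $[0,1]$ in the slice variable $x$: on minor arcs insert Weyl's pointwise differencing bound for $f_{k,N}$ directly into the $L^{2k}$ integral, while on major arcs near a rational $p/q$ factor $f_{k,N}$ as a Gauss sum in the residues mod $q$ times a smoothed integral whose $L^{2k}$ norm is controlled by the Vinogradov-level estimate $J_{k,k}(N)\lesssim_\eps N^{k+\eps}$. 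Carrying out this bookkeeping so that the combined loss over major and minor arcs is at most $N^\eps$ is the content of the conjecture itself, and it is exactly where the purely decoupling-based route meets its present limits for general $k$.
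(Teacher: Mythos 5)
The statement you are trying to prove is not a theorem of the paper: it is Hardy and Littlewood's Hypothesis K*, which the paper states precisely in order to emphasize that it is \emph{open} for every $k \ge 3$ (only $k=2$ is known, via the divisor bound or Weyl differencing). So there is no proof in the paper to compare against, and your proposal does not supply one either -- as you yourself concede, the second step is ``the content of the conjecture itself.'' The first step is fine as far as it goes: decoupling for the moment curve (Theorem \ref{vino}) does give $J_{s,k}(N)\lesssim_\eps N^{s+\eps}$ for $s\le k(k+1)/2$, and in particular $J_{k,k}(N)\lesssim_\eps N^{k+\eps}$. But this controls the full Vinogradov system, whose solution set is \emph{contained} in the solution set of the single equation $\sum a_i^k=\sum b_i^k$, so as you note the inequality $J_{k,k}(N)\le E_k(A_{k,N})$ points the wrong way, and no averaging or slicing argument recovers the one-dimensional slice $F(0,\dots,0,x)$ from bounds on $\int_{[0,1]^k}|F|^{2s}$: a function can be much larger on a measure-zero slice than its average over the cube suggests, and here it genuinely is, since the extra integrations in $y_1,\dots,y_{k-1}$ are exactly what kills the non-system solutions.

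The circle-method descent you sketch cannot be made to close with known inputs, and it is worth seeing concretely why. The major arcs are not the issue (they contribute roughly the expected $N^{k}$); the problem is the minor arcs. The standard move $\int_{\mathfrak m}|f_{k,N}|^{2k}\le \bigl(\sup_{\mathfrak m}|f_{k,N}|\bigr)^{2k-2}\int_0^1|f_{k,N}|^2 = \bigl(\sup_{\mathfrak m}|f_{k,N}|\bigr)^{2k-2}\,N$ would require $\sup_{\mathfrak m}|f_{k,N}|\lesssim N^{1/2+\eps}$, i.e.\ square-root cancellation pointwise on minor arcs, whereas Weyl differencing gives only $N^{1-2^{1-k}+\eps}$ and Vinogradov-type bounds only $N^{1-c/k^2}$; substituting these yields a bound far above $N^{k+\eps}$ for every $k\ge 3$, and any mean-value substitute on the minor arcs of the required strength is equivalent to Conjecture \ref{HypK*} itself. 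There is also a structural obstruction flagged in the paper's final section: your route uses only robust analytic features of the frequencies $a^k$ (decoupling, locally constant property, local orthogonality), so if it worked it would prove the ``robust'' version in Question \ref{genfkN}; the author points out that it is unknown whether that robust statement is even true, and that a proof of Conjecture \ref{HypK*} may have to exploit finer arithmetic of the exact powers $a^k$ -- exactly the kind of input your plan never uses. In short, the proposal correctly assembles the known decoupling consequences but does not, and with present technology cannot, bridge the gap to the stated conjecture.
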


This conjecture is open for all $k \ge 3$.  The Fourier series of $f_{3, N}$ is fairly simple to write down.  But it is very difficult to determine good bounds for the $L^p$ norms of $f_{3,N}$, or for the size of superlevel sets $U_{\lambda}(f_{3,N})$.  This is a classical and striking example of how difficult it is to read off information about $f(x)$ from information about its Fourier series.

On the other hand, there are cases when we can use Fourier analysis to estimate an $L^p$ norm and then use Lemma \ref{eventrick} to get a new estimate for the number of solutions to a diophantine equation.  One of the most interesting examples of this kind concerns Vinogradov's mean value theorem, which is a multivariable generalization of the functions we just considered.

Define

$$F_{k,N}(x_1, ..., x_k) = \sum_{a=1}^N e^{2 \pi i (a x_1 + a^2 x_2 + ... + a^k x_k)}. $$

By Lemma \ref{eventrick}, $\int_{[0,1]^k} |F_{k,N}(x)|^{2s} dx$ is equal to the number of solutions to the following diophantine system of equations:

$$ a_1^j + ... + a_s^j = b_1^j + ... + b_s^j \textrm{ for all } 1 \le j \le k, \textrm{ with } a_i, b_i \in \ZZ, 1 \le a_i, b_i \le N $$

Vinogradov \cite{V} studied the $L^p$ norms of $F_{k,N}$ in the 1930s.  He was able to prove sharp estimates for $\| F_{k,N} \|_{L^p}$ for sufficiently large $p$.  He used these bounds to greatly improve the estimates for Weyl sums and Waring's problem in large degree, and also to improve the bounds on the zero-free region of the Riemann zeta function.  Vinogradov's argument cleverly exploited both sides of Equation (\ref{eventrick}): some parts of the argument directly count the number of solutions to some diophantine systems in the variables $a_i, b_i$, and other parts of the argument estimate integrals in the $x$ variable.  Some important ideas in the proof of decoupling are related to Vinogradov's argument, and we will discuss this more in Section \ref{subsecindplustrans}.  

In the last decade, mathematicians have proven estimates for $\| F_{k,N} \|_{L^p}$ that are sharp up to factors of $C(k, \eps) N^\eps$ for every $k$ and $p$.  As a corollary, we get estimates for the number of solutions to the Vinogradov system that are sharp up to a factor $C(k, \eps) N^\eps$.  

\begin{theorem} \label{vino} (\cite{BDG}, \cite{Woo2}, \cite{Woo3})

$$ \| F_{k,N} \|_{L^p([0,1]^k)} \le C(k, \eps) N^\eps \left( N^{1/2} + N^{1 - \frac{k(k+1)}{2p}} \right). $$

\end{theorem}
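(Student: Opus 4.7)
First I would reduce the estimate to a decoupling inequality for the moment curve $\gamma(t) = (t, t^2, \ldots, t^k)$ in $\RR^k$. Define the extension operator $Eg(y) := \int_0^1 g(t) e^{2\pi i \gamma(t) \cdot y} dt$. Under the anisotropic rescaling $y_j = N^j x_j$ (so $dx = N^{-k(k+1)/2} dy$), the unit cube $[0,1]^k$ in $x$-space corresponds to the box $[0,N] \times [0,N^2] \times \cdots \times [0,N^k] \subset B_{N^k}$ in $y$-space, and $F_{k,N}$ becomes, up to rapidly decaying tails, the extension $Eg$ where $g := \sum_{a=1}^N N \eta(N(t - a/N))$ is a sum of $L^1$-normalized bumps centered at the points $a/N$. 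So proving the theorem reduces to bounding $\|Eg\|_{L^p(B_{N^k})}$ and undoing the Jacobian.

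\textbf{Applying moment-curve decoupling.} At the critical exponent $p_c := k(k+1)$, the Bourgain--Demeter--Guth decoupling inequality for the moment curve, applied at scale $R = N^k$ with the canonical partition of $[0,1]$ into arcs $\tau$ of length $N^{-1} = R^{-1/k}$, states
\[
\|Eg\|_{L^{p_c}(B_{N^k})} \lesssim_\eps N^\eps \left(\sum_\tau \|Eg_\tau\|_{L^{p_c}(w_{B_{N^k}})}^2\right)^{1/2}.
\]
Each $Eg_\tau$ has $\|Eg_\tau\|_\infty \sim 1$ and is essentially supported on a dual anisotropic slab of dimensions $N \times N^2 \times \cdots \times N^k$, of volume $N^{p_c/2}$; hence $\|Eg_\tau\|_{L^{p_c}}^2 \sim N$. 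Summing over the $N$ arcs and taking square roots gives $\|Eg\|_{L^{p_c}(B_{N^k})} \lesssim N^{1+\eps}$. Rescaling back,
\[
\|F_{k,N}\|_{L^{p_c}([0,1]^k)}^{p_c} \;\lesssim\; N^{-p_c/2} \cdot N^{p_c(1+\eps)} \;=\; N^{p_c/2 + p_c \eps},
\]
so $\|F_{k,N}\|_{L^{p_c}} \lesssim_\eps N^{1/2 + \eps}$.

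\textbf{Extending to all $p$.} For $p \le p_c$, Hölder on the probability space $[0,1]^k$ gives $\|F_{k,N}\|_{L^p} \le \|F_{k,N}\|_{L^{p_c}} \lesssim_\eps N^{1/2 + \eps}$, matching the first term. For $p > p_c$, I interpolate against the trivial bound $\|F_{k,N}\|_\infty = F_{k,N}(0) = N$:
\[
\|F_{k,N}\|_{L^p}^p \;\le\; \|F_{k,N}\|_\infty^{p - p_c}\, \|F_{k,N}\|_{L^{p_c}}^{p_c} \;\lesssim\; N^{p - p_c} \cdot N^{p_c(1/2 + \eps)},
\]
giving $\|F_{k,N}\|_{L^p} \lesssim_\eps N^{\,1 - k(k+1)/(2p) + \eps}$, the second term. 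Combining the two ranges covers all $p \ge 2$.

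\textbf{The main obstacle.} All the substance is packed into the decoupling inequality invoked above; both the reduction and the endpoint interpolation are essentially bookkeeping. The decoupling theorem for the moment curve at $p_c = k(k+1)$ is the deep step: it is proven by a bootstrap induction on the scale $N$, combining (i) lower-dimensional decoupling for moment curves in $\RR^{k-1}$ applied after an anisotropic ball-inflation step, and (ii) a multilinear transversality argument (a Kakeya-type inequality) that exploits the non-vanishing of the Wronskian of $\gamma, \gamma', \ldots, \gamma^{(k)}$. Ensuring the induction closes with only an $N^\eps$ loss at each iteration, rather than $N^{C(k)}$, is where the real work lies.
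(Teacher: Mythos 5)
Your proposal is correct and follows essentially the same route the paper describes: reduce the exponential sum estimate to the Bourgain--Demeter--Guth decoupling theorem for the moment curve in $\RR^k$ at the critical exponent $p_c = k(k+1)$ (the paper carries this out explicitly only for $k=2$ via the parabola, citing \cite{BDG} for general $k$), then handle $p \le p_c$ by H\"older and $p > p_c$ by interpolation with the trivial $L^\infty$ bound. Your rescaling, wave-packet count $\| Eg_\tau \|_{L^{p_c}}^2 \sim N$, and the resulting exponents all check out, with the acknowledged "rapidly decaying tails" step being exactly the standard technical passage from $\RR^k$ to large balls that the paper also glosses over.
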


The proof in \cite{BDG} uses decoupling and the proof in \cite{Woo3} uses the method of efficient congruencing.  (Historically, Wooley developed efficient congruencing starting in the 90s, cf. \cite{Woo1}.  He improved Vinogradov's estimates and gave sharp estimates for $k=3$ in \cite{Woo2}.  Then \cite{BDG} used decoupling to prove Theorem \ref{vino} and immediately afterwards, \cite{Woo3} used efficient congruencing to give a different proof of Theorem \ref{vino}.)

Both \cite{BDG} and \cite{Woo3} are quite technical.  Recently, Guo-Li-Yung-Zorin-Kranich \cite{GLYZ} gave a dramatically simpler proof of Theorem \ref{vino}, combining some of the features of \cite{BDG} and \cite{Woo3} with some new clarifying ideas.  Their paper is ten pages long and essentially self-contained.

Lemma \ref{eventrick} is a special trick for understanding $L^p$ norms when $p$ is an even integer.  This even integer trick also plays an important role in the problems we discussed in Section \ref{secrestr}.  In \cite{F1}, Fefferman used a version of the even integer trick to prove Conjecture \ref{conjrestriction} in dimension $n=2$.  The $L^p$ exponent in Conjecture \ref{conjrestriction} is $p = \frac{2n}{n-1}$, which is an even integer when $n=2$ but not for any $n \ge 3$.  In the early 90s, in \cite{B11}, Bourgain used the even integer trick to prove sharp periodic Strichartz estimates when $d=1, 2$ (the cases $d=1,2$ in Theorem \ref{strichtorus}).  The exponent in the Strichartz estimate is $\frac{2(d+2)}{d}$, which is an even integer when $d=1,2$, but not for any $d > 2$.  Another important problem in this circle is Montgomery's conjecture about the $L^p$ norms of Dirichlet polynomials.  When $p$ is an even integer, Montgomery gave sharp estimates for the relevant $L^p$ norms in just a page (cf. \cite{Mo}).  But giving a sharp estimate for any other value of $p$ is a major open problem.  This might help explain why, even though Theorem \ref{strichtorus} was already known in dimensions $d=1, 2$, it still seemed far out of reach to prove it for any other $d$.

Before decoupling, the situation concerning periodic Strichartz estimates in dimensions 1,2 was rather curious.  The periodic Strichartz estimate can be considered as a result in PDE, resolving a problem of mathematical physics.  But the proof depended on number theory facts, such as unique factorization.  The decoupling proof of Theorem \ref{strichtorus} is purely analytic -- with no input from number theory.  The argument can then recover some of the number theory that went into the original proof.  The relevant number theory estimates are not that difficult, but proving them by analysis is still interesting.  Building on this, Bourgain and Demeter began to work on Vinogradov's mean value theorem in \cite{BD2}, eventually leading to Theorem \ref{vino} and new results in number theory.

Theorem \ref{vino} leads to improved bounds for Waring's problem on the number of ways to write an integer as a sum of $k^{th}$ powers and the related problem of Weyl sums.  Other applications of decoupling have led to incremental improvements in other classical problems of analytic number theory such as the Lindelof hypothesis \cite{B6}.  Guo-Zhang \cite{GZ} and Guo-Zorin-Kranich \cite{GZK} have extended Theorem \ref{vino} to more complex systems of diophantine equations, introduced in number theory by Arkhipov-Chubarikov-Karatsuba \cite{ACK}.

\subsection{Influence of the proof}

Besides the new results, the method of proof of decoupling has had a big influence on the field.  There is a classical toolbox in harmonic analysis with tools like orthogonality, integration by parts, and Holder's inequality.  For hard problems in this area, such as the restriction conjecture, people who have worked a lot on them generally feel that this set of classical tools is not sufficient to understand the problem.  Over the last 25 years, mathematicians have brought into play ideas from other areas in order to attack some of these hard problems.  For instance, Wolff (\cite{W5} and \cite{W4}) brought in ideas from combinatorial geometry and topology, Bourgain (\cite{B5}) brought in ideas from combinatorial number theory, and Dvir (\cite{D}) brought in ideas from error-correcting codes and algebraic geometry.  In contrast to these developments, the proof of decoupling is based on the classical toolbox.  The most important idea in the proof is to take advantage of estimates at many different scales.  Using many different scales is also a classical idea in harmonic analysis.  But it is really striking how powerful it turns out to be in the context of decoupling.   I personally was shocked that it is possible to prove Theorem \ref{strichtorus} using only these tools.  The main goal of the article is to explore how combining information at many scales helps to prove theorems like Theorem \ref{strichtorus} and Theorem \ref{vino}.

\subsection{Outline of the rest of the article}

In Section 2, we will introduce the statement of decoupling.  In Section 3, we will begin to discuss multiscale arguments, and we will see how the statement of decoupling was carefully crafted to work well in such arguments.  In Section 4, we will discuss some ideas of the proof of decoupling.  

In Section 5, we will discuss the connection between the restriction problem and the Kakeya problem, and try to explain why the restriction problem seems to be so difficult.  Then we will discuss why decoupling turns out to be easier than restriction.  

In Section 6, we will survey some other applications of decoupling in harmonic analysis. 

In Section 7, we will discuss some limitations of the method, some frustrating aspects of the proof, and some open problems.

\section{The statement of decoupling}

Now that we have seen some applications of decoupling, we turn to the actual statement of decoupling.  The statement of decoupling was crafted carefully, and after we state it we will spend two sections digesting it and discussing some of the choices involved in the statement.

Suppose that $\Omega \subset \RR^n$ and that $\Omega$ is a disjoint union of subsets $\theta$: $ \Omega = \bigsqcup \theta. $
If $f: \RR^n \rightarrow \CC$ is a function, and $\hat f$ is supported in $\Omega$, then we can decompose $f = \sum_\theta f_\theta$ where $f_\theta$ is defined by

$$ f_\theta = \int_\theta \hat f(\omega) e^{2 \pi i \omega x} d \omega. $$

\noindent Decoupling has to do with the relationship between $L^p$ norm of $f$ and the $L^p$ norms of $f_\theta$ for the different $\theta$ in the decomposition $\Omega = \sqcup \theta$.

\begin{defn} \label{defdec} Suppose that $\Omega \subset \RR^n$ and $\Omega$ is a disjoint union of subsets $\theta$: $\Omega = \sqcup \theta$.  For each exponent $p$, we define the decoupling constant $D_p( \Omega = \sqcup \theta)$ to be the smallest constant so that for every function $f$ with $\hat f$ supported in $\Omega$,

\begin{equation} \label{decdef} \| f \|_{L^p(\RR^n)}^2 \le D_p(\Omega = \sqcup \theta)^2 \sum_{\theta} \| f_\theta \|_{L^p(\RR^n)}^2. \end{equation}

\end{defn}

\noindent If $p=2$, then orthogonality gives $\| f \|_{L^2}^2 = \sum_\theta \| f_\theta \|_{L^2}^2$, and so $D_2(\Omega = \sqcup \theta) =1$ for any decomposition $\Omega = \sqcup \theta$.  Decoupling theorems for higher $p$ are a kind of strengthening of orthogonality.  For $p > 2$, the value of $D_p(\Omega = \sqcup \theta)$ depends on the geometry of the decomposition.

As an example of a decomposition, first let $P$ denote the truncated parabola: 

$$P = \{ (\omega_1, \omega_2) \in \RR^2: \omega_2 = \omega_1^2, -1 \le \omega_1 \le 1 \}. $$

\begin{defn} \label{defPN}  For a large parameter $N$, we let $\Omega$ be the $N^{-2}$-neighborhood of $P$.  For $j=-N, ..., N$, we define 

$$\theta_j := \Omega \cap \{ \frac{j}{N} - \frac{1}{2N} \le \omega_1 \le \frac{j}{N} + \frac{1}{2N} \}. $$

\noindent Each $\theta_j$ is approximately a rectangular box of dimensions $N^{-2} \times N^{-1}$.

\noindent We have $\Omega = \sqcup_{j=1}^N \theta_j$, and we abbreviate this whole decomposition as $P_N$.

\end{defn}

We can now state our first decoupling theorem.

\begin{theorem} \label{decpar} (\cite{BD}) For each $\eps > 0$, for each $2 \le p \le 6$, $D_p(P_N) \le C_\eps N^\eps$.  

In other words, if $2 \le p \le 6$, and if $\hat f$ is supported in the $N^{-2}$-neighborhood of $P$, then

\begin{equation} \label{decpareq} \| f \|_{L^p(\RR^2)}^2 \le C_\eps N^\eps \sum_{j=1}^N \| f_{\theta_j} \|_{L^p(\RR^2)}^2. \end{equation}

\end{theorem}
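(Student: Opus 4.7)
The plan is to prove the theorem at the critical endpoint $p = 6$ by induction on the scale $N$, then interpolate with the trivial $L^2$ bound to cover the full range. At $p = 2$, orthogonality gives $D_2(P_N) = 1$ directly. For $2 < p < 6$, Holder's inequality applied to the decomposition $f = \sum_j f_{\theta_j}$ (exploiting the bounded multiplicity of the tile decomposition in spectrum) interpolates the claim from the $p=2$ and $p=6$ endpoints, so the task reduces to showing $D_6(P_N) \le C_\eps N^\eps$.

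The first structural ingredient is parabolic rescaling. The affine group of $\RR^2$ preserving the parabola $P$ is generated by translations along $P$ and the parabolic dilations $(\omega_1, \omega_2) \mapsto (\lambda \omega_1, \lambda^2 \omega_2)$. Under a suitable such affine map, an arc $\tau \subset P$ of horizontal width $1/M$ together with its subdecomposition into tiles of width $1/N$ is carried onto a dilated copy of the full decomposition $P_{N/M}$. Since the $L^6$ norm transforms predictably under the dual affine change of variables on the physical side, this yields the recursive inequality
$$D_6(P_N) \le D_6(P_M)\cdot D_6(P_{N/M}) \quad \text{for every } 1\le M \le N,$$
which by itself only gives a polynomial bound.

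The main substantive ingredient is a bilinear gain from transversality. Fix a small $\nu > 0$ and perform a Bourgain--Guth broad/narrow decomposition: at each point $x$, $|f(x)|^6$ is controlled either by a \emph{broad} term $C_\nu\max_{(\tau_1,\tau_2)}|f_{\tau_1}(x)f_{\tau_2}(x)|^3$ over pairs of $\nu$-separated arcs $\tau_1,\tau_2$, or by a \emph{narrow} term in which $f(x)$ is concentrated on $O(\nu^{-1})$ consecutive arcs. For the broad term I would invoke a bilinear Cordoba--Fefferman $L^2$ orthogonality estimate on balls of radius $\sim N^2$: strict convexity of $P$ forces the addition map $\tau_1\times\tau_2\to\RR^2$ to be essentially bi-Lipschitz on transverse pairs, so the Fourier support of $f_{\tau_1}\overline{f_{\tau_2}}$ is tiled (up to bounded overlap) by translates of the tile-sized rectangles, and local $L^2$ orthogonality at the dual scale of these rectangles delivers a bilinear $L^6$ decoupling with \emph{no} $N^\eps$-type loss. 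The narrow term is absorbed by the parabolic rescaling inequality applied at scale $\nu N$.

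The main obstacle is closing the induction without accumulating loss. The combined inequality takes the form $D_6(P_N) \le C_\nu + D_6(P_{\nu N})$, which must be iterated on the order of $\log_{1/\nu} N$ times to drive the scale down to $O(1)$. Choosing $\nu=\nu(\eps)$ and summing the resulting geometric series in $C_\nu$ gives $D_6(P_N)\le C_\eps N^\eps$. The delicate point is precisely that the bilinear step must be arranged to lose no power of $N$ per application, since any $N^\delta$ loss would compound through the $O(\log N)$ iterations into an honest polynomial bound. This is exactly why $p = 6$ is critical: it is the unique exponent at which the wave-packet counting in the Cordoba--Fefferman orthogonality computation matches the dimension counting of the tile decomposition.
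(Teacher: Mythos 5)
Your overall architecture (reduce to $p=6$, parabolic rescaling/submultiplicativity, a Bourgain--Guth broad/narrow decomposition, induction on the scale) matches the general shape of the Bourgain--Demeter argument, and the narrow-term bookkeeping and the reduction of linear to bilinear decoupling are fine. But the heart of your proposal --- that the broad term is closed by a \emph{single} application of Cordoba--Fefferman biorthogonality, yielding a bilinear $L^6$ decoupling from arcs of width $\sim\nu$ all the way down to caps of width $1/N$ with \emph{no} power loss --- is a genuine gap, and in fact it is essentially circular. The biorthogonality you invoke (finite overlap of the Fourier supports of the products $f_{\theta_1}\overline{f_{\theta_2}}$ for transverse caps) gives exactly an $L^4$-level statement, namely $\| f_{\tau_1} f_{\tau_2}\|_{L^2}^2 \lesssim_\nu \sum_{\theta_1,\theta_2}\|f_{\theta_1}f_{\theta_2}\|_{L^2}^2$; to convert this into an $L^6$ bound you must interpolate with a trivial ($L^\infty$-type) estimate, and that costs a positive power of $N$ per application, which then compounds through your $O(\log N)$ iterations. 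A lossless bilinear $L^6$ decoupling down to scale $1/N$ is, by your own broad/narrow plus rescaling argument, equivalent to the theorem you are trying to prove, so asserting it as an input hides the entire difficulty. This is exactly the phenomenon the survey describes: the single-scale orthogonality/transversality argument (Bennett--Carbery--Tao style, Subsection 4.4) only yields the non-sharp bound --- e.g. $N^{2+\eps}$ instead of the sharp $N^{1+\eps}$ in Corollary 4.1 --- and the sharp result is obtained only by combining induction on scales with transversality/local orthogonality exploited at \emph{every} intermediate scale (Subsection 4.5, and the multilinear Kakeya input of Section 5.1), not once at the top scale. Your claimed recursion $D_6(P_N)\le C_\nu + D_6(P_{\nu N})$ would even give a logarithmic bound with almost no work, which should itself be a warning sign that the difficulty has been swept into the unproved bilinear step.

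A secondary, more minor point: the passage from $p=6$ to $2<p<6$ is not a bare application of Holder to the decomposition $f=\sum_j f_{\theta_j}$; decoupling constants do not interpolate in the naive Riesz--Thorin sense. The deduction of the full range from the endpoint is standard (using local $L^2$ orthogonality on balls of radius $N^2$ together with Holder on those balls), but it needs to be said correctly. The fix for the main gap is to replace the one-shot bilinear step by the actual multiscale mechanism: work with quantities that average $L^2$ mass of the $f_\tau$ at intermediate scales $N^{1/2}, N^{3/4},\dots$, use local orthogonality and bilinear (in $\RR^2$) Kakeya at each of these scales so that each step loses only a bounded constant, and let parabolic rescaling plus the induction hypothesis absorb the rest.
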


This decoupling theorem can be applied to exponential sums, and it implies Theorem \ref{strichtorus} in the case $d=1$ and Theorem \ref{vino} in the case $k=2$.  Theorem \ref{strichtorus} for a $d$-dimensional torus follows from a decoupling theorem for the paraboloid in $\RR^{d+1}$, and Theorem \ref{vino} for higher $k$ follows from a decoupling theorem for the moment curve in $\RR^k$.

Let us see how this decoupling theorem leads to $L^p$ estimates for exponential sums.  This will help a little to digest the definition of $D_p$.  Suppose we start with an exponential sum using frequencies on the truncated parabola.  For $j=-N$ to $N$, we define the frequency $\omega_j = (\frac{j}{N}, \frac{j^2}{N^2}) \in P$, and we let $f$ be the exponential sum

$$f(x) = \sum_{j=-N}^N a_j e^{2 \pi i \omega_j x}. $$

If the $a_j$ were chosen randomly, then with high probability, we would have $|f(x)| \sim (\sum_j |a_j|^2)^{1/2}$ for most $x$.  In this random case, we would have
$ \| f \|_{L^p(B_R)} \sim (\sum_j |a_j|^2)^{1/2} |B_R|^{1/p}. $ So the best possible bound we could hope for has the form 

$$ \| f \|_{L^p(B_R)} \sim (\sum_j |a_j|^2)^{1/2} |B_R|^{1/p}. $$

Decoupling achieves such a bound up to a factor of $N^\eps$ when $2 \le p \le 6$ and $R$ is large enough.  This bound in turn implies Theorem \ref{strichtorus} for $d=1$ and Theorem \ref{vino} for $k=2$.  

Here is how to apply decoupling.  Note that the frequency $\omega_j$ lies in $\theta_j$.  In fact, if we write $f = \sum_j f_{\theta_j}$, then $f_{\theta_j} = a_j e^{2 \pi i \omega_j x}$.  Directly applying Theorem \ref{decpar} doesn't tell us anything because $\| f_{\theta_j} \|_{L^p(\RR^2)}$ is infinite.  But with a little technical work, one can prove that a similar estimate holds with $L^p$ norms on large balls instead of $L^p$ norms on the whole plane.  In particular, if $R \ge N^2$, then 

$$ \| f \|_{L^p(B_R)}^2 \le 100 D_p(P_N)^2 \sum_{j=-N}^N \| a_j e^{2 \pi i \omega_j x} \|_{L^p(B_R)}^2. $$

\noindent (The extra factor 100 comes from the technical work of passing from $\RR^2$ to $B_R$.)   If $p=6$, then we can plug in $D_6(P_N) \le C_\eps N^\eps$ and simplify everything to get

$$ \| f \|_{L^6(B_R)} \le C_\eps N^\eps (\sum_{j=1}^N |a_j|^2)^{1/2} |B_R|^{1/6}. $$

\noindent This bound matches the random example above up to the factor $C_\eps N^\eps$, and so in particular it is tight up to this factor.  This estimate is the periodic Strichartz estimate for $d=1$ and the Vinogradov mean value theorem for $k=2$.  

The definition of the decoupling constant $D_p$ was crafted partly to make this computation work.  This explains the squares in Definition \ref{defdec}.

\section{Induction on scales} \label{secindscales}

The definition of decoupling was crafted by Thomas Wolff in his work on local smoothing \cite{W4}.  He noticed that this definition is well suited for combining information from many scales.  The whole field of decoupling leans on this observation.  The first example of combining scales is the following lemma, which essentially appears in \cite{W4}.  

\begin{lemma} \label{submult} $D_p(P_{N_1 N_2}) \le D_p(P_{N_1}) D_p(P_{N_2}).$
\end{lemma}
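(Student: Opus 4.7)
My plan is a two-scale iteration: I apply $P_{N_1}$-decoupling at the coarse scale, and then within each coarse slab I apply $P_{N_2}$-decoupling after a parabolic rescaling that matches the relevant piece of $P$ to the standard parabola. Beyond Definition~\ref{defdec}, the only input I need is that $D_p$ is invariant under invertible affine changes of coordinates on the Fourier side. This invariance holds because such a change corresponds on the spatial side to precomposition with an invertible linear map, which multiplies $\|f\|_{L^p}^2$ and each $\|f_\theta\|_{L^p}^2$ by the same Jacobian factor; that factor then cancels from the inequality defining $D_p$.

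For Step 1, I take $f$ with $\hat f$ supported in the $(N_1 N_2)^{-2}$-neighborhood of $P$ and note that this sits inside the $N_1^{-2}$-neighborhood, so $P_{N_1}$-decoupling applies directly. Writing $\{\theta_j^{(1)}\}$ for the slabs of $P_{N_1}$, this yields
$$\|f\|_{L^p(\RR^2)}^2 \le D_p(P_{N_1})^2 \sum_j \|f_{\theta_j^{(1)}}\|_{L^p(\RR^2)}^2.$$
For Step 2, I fix $j$ and introduce the affine map
$$T_j(\omega_1, \omega_2) = \bigl(N_1\omega_1 - j,\; N_1^2\omega_2 - 2j N_1 \omega_1 + j^2\bigr).$$
A quick algebraic check shows that on the parabola $T_j(\omega_1, \omega_1^2) = (\tilde\omega_1, \tilde\omega_1^2)$, so $T_j$ sends the arc of $P$ over $\theta_j^{(1)}$ onto a sub-arc of the standard parabola. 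The transverse dilation factor is $N_1^2$, so the $(N_1 N_2)^{-2}$-neighborhood of $P$ becomes the $N_2^{-2}$-neighborhood, and a short index computation shows that the subdivision of $\theta_j^{(1)}$ by the slabs of $P_{N_1 N_2}$ is carried precisely onto the slabs of the $P_{N_2}$-decomposition. Applying $D_p(P_{N_2})$ to the rescaled function and invoking the affine invariance above gives
$$\|f_{\theta_j^{(1)}}\|_{L^p(\RR^2)}^2 \le D_p(P_{N_2})^2 \sum_k \|f_{\theta_{j,k}}\|_{L^p(\RR^2)}^2,$$
where $\{\theta_{j,k}\}_k$ are the $P_{N_1 N_2}$-slabs contained in $\theta_j^{(1)}$.

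Substituting this into the Step 1 inequality and re-indexing the double sum over the full decomposition $P_{N_1 N_2}$ produces exactly the product bound $D_p(P_{N_1 N_2}) \le D_p(P_{N_1}) D_p(P_{N_2})$. The only place where I would have to proceed carefully is the bookkeeping in Step 2 — writing down $T_j$, checking the defining identity on $P$, confirming the transverse scaling factor, and matching the nested subdivision to $P_{N_1 N_2}$ by an explicit index shift of the form $\ell = jN_2 + k$. I do not expect any analytic obstacle; the content of the lemma is really the structural observation that the decoupling constant behaves multiplicatively under the nested parabolic-rescaling structure of the slabs, with affine invariance doing the rest of the work.
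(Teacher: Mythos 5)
Your proposal is correct and follows essentially the same route as the paper: Step 1 applies $P_{N_1}$-decoupling after noting that the $(N_1N_2)^{-2}$-neighborhood sits inside the $N_1^{-2}$-neighborhood, and Step 2 uses parabolic rescaling together with the affine invariance of $D_p$ (the paper's equation~(\ref{cov})) to identify the decoupling constant of each slab's subdivision with $D_p(P_{N_2})$. Your explicit map $T_j$ and the Jacobian-cancellation justification are just a more concrete writing-out of the change-of-variables step the paper sketches.
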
 

Let us first discuss why this is significant, and then we will sketch the proof.  If we iterate this lemma $k$ times, we get

\begin{equation} \label{submultk} D_p(P_{N_1^k}) \le D_p(P_{N_1})^k. \end{equation}

Suppose that we are able to find a single number $N_1$ for which we can prove $D_p(P_{N_1}) \le N_1^{\frac{1}{1000}}$.  Then Equation (\ref{submultk}) implies that $D_p(P_N) \le N^{\frac{1}{1000}}$ when $N$ is any power of $N_1$.  This implies the decoupling theorem, Theorem \ref{decpar}, with $\eps = \frac{1}{1000}$.  For any particular $N_1$, the decoupling constant $D_p(N_1)$ can be approximated to a given accuracy by a finite computation.  This isn't immediately obvious from the definition, but it isn't that difficult to show.  So in principle there exists a brute force proof of Theorem \ref{decpar} with $p=6$ (the most interesting $p$) and $\eps = \frac{1}{1000}$, where the proof is a giant finite computation to check that $D_6(P_{N_1}) \le N_1^\frac{1}{1000}$ for a particular $N_1$ together with Lemma \ref{submult}.

This situation is very different from the periodic Strichartz estimate, Theorem \ref{strichtorus}, or Vinogradov's mean value theorem, Theorem \ref{vino}.  For instance, suppose we somehow knew that Theorem \ref{strichtorus} holds when $d=3$ and $N = 10^{10}$.  Recall that Theorem \ref{strichtorus} is an $L^p$ estimate for periodic solutions to the Schrodinger equation with frequencies at most $N$.  If we somehow knew optimal bounds for periodic solutions with frequency at most $10^{10}$, I don't see how we could use that information to say anything about solutions with much larger frequencies, like $10^{1000}$.  

By switching our point of view from the original problem of periodic Strichartz estimates to the decoupling problem, we make it easier to combine information from different scales.
The real proof of the decoupling theorem does not involve a giant brute force computation like we described above.  It combines the multiscale idea from Lemma \ref{submult} with other ideas from the field, and we will discuss it more in the next section.

Next let's talk about the proof of Lemma \ref{submult}.  The proof is very short, and it illustrates how the statement of decoupling was crafted to combine information from different scales.

The first observation is that decoupling behaves in a nice way under translations and under linear changes of variable.  Suppose that $L: \RR^n \rightarrow \RR^n$ is a linear change of variables, or a translation, or a composition of those.  If we start with a decomposition $\Omega = \sqcup \theta$, then we get a new decomposition $L \Omega = \sqcup L \theta$.  The first observation is that the new decomposition has the same decoupling constant as the original one.

\begin{equation} \label{cov} D_p( L\Omega = \sqcup L \theta) = D_p( \Omega = \sqcup \theta). \end{equation}

\noindent If $g$ has Fourier support in $\Omega$ and $g = \sum g_\theta$, then we can perform a change of a variables to get a new function $\tilde g$ with Fourier support in $L \theta$.  Since the Fourier transform behaves in a nice way with respect to linear changes of variables and to translations, it's easy to track how the decoupling constant behaves and check (\ref{cov}).  

Now we start the proof sketch of Lemma \ref{submult}.  Suppose that $\hat f$ is supported in $\Omega$, the $(N_1 N_2)^{-2}$ neighborhood of $P$.  This neighborhood is divided into blocks $\theta$ of length $(N_1 N_2)^{-1}$, and we need to prove that

$$ \| f \|_{L^p}^2 \le D_p(P_{N_1})^2 D_p(P_{N_2})^2 \sum_\theta \| f_\theta \|_{L^p}^2. $$

We prove this bound in two steps.  Note that $\Omega$ is contained in the $N_1^{-2}$ neighborhood of $P$, which we can divide into blocks $\tau$ of length $N_1^{-1}$.  By definition of $D_p(P_{N_1})$, we have

$$ \| f \|_{L^p}^2 \le D_p(P_{N_1})^2 \sum_\tau \| f_\tau \|_{L^p}^2. \eqno{\textrm{(Step 1)}}$$

The support of $\hat f_\tau$ is contained in $\Omega \cap \tau$, which we can decompose as $\Omega \cap \tau = \sqcup_{\theta \subset \tau} \theta$. 

By the definition of $D_p$, 

$$ \| f_\tau \|_{L^p}^2 \le D_p( \Omega \cap \tau = \sqcup_{\theta \subset \tau} \theta)^2 \sum_{\theta \subset \tau} \| f_\theta \|_{L^p}^2. $$

Notice that there are $N_2$ different $\theta$ in each $\tau$.  In fact, there is a linear change of variables that takes $\Omega \cap \tau$ to the $N_2^{-1}$-neighborhood of $P$ and takes each $\theta$ to a block of length $N_2^{-1}$.  Therefore, $D_p( \Omega \cap \tau = \sqcup_{\theta \subset \tau} \theta) = D_p(P_{N_2})$.  Plugging in to the last indented equation, we get

$$ \| f_\tau \|_{L^p}^2 \le D_p(N_2)^2 \sum_{\theta \subset \tau} \| f_\theta \|_{L^p}^2. \eqno{\textrm{(Step 2)}}$$

Now if we combine Step 1 and Step 2, we get the desired inequality:

$$ \| f \|_{L^p}^2 \le D_p(P_{N_1})^2 \sum_\tau \| f_\tau \|_{L^p}^2 \le D_p(N_1)^2 D_p(N_2)^2 \sum_\theta \| f_\theta \|_{L^p}^2. $$

\section{Ideas of the proof}

In this section, we discuss some of the ideas in the proof of the decoupling theorem for the parabola, Theorem \ref{decpar}.  By now there are actually several proofs of Theorem \ref{decpar} (cf. \cite{BD}, \cite{Li1}, \cite{GMW}).  Each proof has some advantages.  We will focus on the original proof in \cite{BD}, but as we go we will try to highlight certain ideas that appear in all of the proofs.

Recall that $P$ is the truncated parabola in $\RR^2$.  We let $\Omega$ be the $N^{-2}$-neighborhood of $P$, and we decompose $\Omega$ into $N$ pieces $\theta$, which are each approximately rectangles of dimensions $N^{-2} \times N^{-1}$.  Suppose $\hat f$ is supported on $\Omega$ and decompose $f = \sum_\theta f_\theta$.  To help illustrate the ideas, we focus on the following corollary of Theorem \ref{decpar}. 

\begin{cor} \label{heavy} If $f = \sum_\theta f_\theta$ as in the last paragraph, and $\| f_\theta \|_{L^\infty(\RR^2)} \le 1$ for every $\theta$, then

$$ |U_{N/10}(f) \cap B_{N^2}| \le C_\eps N^{1 + \eps}. $$

\end{cor}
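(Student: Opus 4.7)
The plan is to apply Theorem \ref{decpar} at the critical exponent $p=6$ and combine it with a Chebyshev-style lower bound. Writing $U := U_{N/10}(f) \cap B_{N^2}$, the very definition of the super-level set gives the trivial lower bound
$$ (N/10)^6 |U| \le \int_U |f|^6 \le \|f\|_{L^6(B_{N^2})}^6. $$
So it suffices to establish the matching upper bound $\|f\|_{L^6(B_{N^2})}^6 \le C_\eps N^{7+\eps}$.

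For the upper bound, I would apply a localized form of Theorem \ref{decpar} on the ball $B_{N^2}$. As the excerpt already notes, with a standard localization argument (either weighting by a Schwartz bump function adapted to $B_{N^2}$ or replacing indicator functions by rapidly decaying weights), one obtains
$$ \|f\|_{L^6(B_{N^2})}^2 \le C_\eps N^\eps \sum_\theta \|f_\theta\|_{L^6(B_{N^2})}^2. $$
The choice of scale $R = N^2$ is natural here because $R^{-1} = N^{-2}$ matches the thickness of $\Omega$, so the localized version of $f$ still has Fourier support in (essentially) the same $N^{-2}$-neighborhood of $P$ and the decoupling hypotheses remain in force.

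Next, I would plug in the hypothesis $\|f_\theta\|_{L^\infty} \le 1$ to get the crude but effective bound
$$ \|f_\theta\|_{L^6(B_{N^2})}^6 \le |B_{N^2}| \lesssim N^4, \qquad \text{hence} \qquad \|f_\theta\|_{L^6(B_{N^2})}^2 \lesssim N^{4/3}. $$
Summing over the $\sim N$ blocks $\theta$ gives $\sum_\theta \|f_\theta\|_{L^6(B_{N^2})}^2 \lesssim N^{7/3}$, and therefore
$$ \|f\|_{L^6(B_{N^2})}^6 \le C_\eps N^{3\eps} \cdot N^7. $$
Combined with the lower bound, this yields $|U| \le C_\eps N^{1+3\eps}$, which is the claim after relabeling $\eps$.

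The main obstacle is conceptual rather than computational: one has to recognize that the right exponent is $p=6$ (the endpoint of Theorem \ref{decpar}) and the right scale is $R = N^2$ (the reciprocal of the thickness of $\Omega$). The reason $p=6$ works is that at this exponent the decoupling loss $N^\eps$ is small enough that the brute-force estimate $\|f_\theta\|_{L^6(B_{N^2})}^6 \le |B_{N^2}|$, made possible by the uniform $L^\infty$ hypothesis on the pieces, is already tight. Beyond that, the only genuinely technical step is the localization from $\RR^2$ to $B_{N^2}$, but this is by now routine in the decoupling literature and was flagged by the author immediately after the statement of Theorem \ref{decpar}.
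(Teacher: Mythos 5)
Your argument is correct, and the numerology checks out: Chebyshev gives $(N/10)^6|U| \le \|f\|_{L^6(B_{N^2})}^6$, local decoupling at $p=6$ on a ball of radius $N^2$ gives $\|f\|_{L^6(B_{N^2})}^2 \le C_\eps N^\eps \sum_\theta \|f_\theta\|_{L^6(B_{N^2})}^2$, and the global hypothesis $\|f_\theta\|_{L^\infty(\RR^2)} \le 1$ makes each $\|f_\theta\|_{L^6}^2 \lesssim N^{4/3}$ even after the weights from the localization are taken into account, so summing over the $\sim N$ blocks yields $\|f\|_{L^6(B_{N^2})}^6 \le C_\eps N^{7+3\eps}$ and hence $|U| \le C_\eps N^{1+3\eps}$. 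This is the natural black-box deduction, and since the statement is billed as a corollary of Theorem \ref{decpar}, it is a legitimate proof. It is, however, not the route the paper takes: the point of Section 4 is to use this corollary as a simplified test case for explaining how Theorem \ref{decpar} is itself proved, so the paper sketches a self-contained argument that never invokes the decoupling theorem --- local orthogonality on balls of radius $N$ (giving the weak bound $N^3$), the wave-packet/locally-constant structure of the $f_\tau$ at intermediate scales $M = N^{1/2}, N^{1/4}, \dots$, transversality of the dual tubes $\tau^*$ coming from the curvature of the parabola (improving to $N^{2+\eps}$), and finally induction on scales combined with transversality to reach the sharp $N^{1+\eps}$. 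What your approach buys is brevity and a clear explanation of why $p=6$ and $R=N^2$ are the right parameters; what the paper's approach buys is an exposition of the mechanism inside the proof of decoupling, which a deduction from the theorem as a black box cannot provide.
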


First let us give a little context for the numbers that appear in this bound.  By the triangle inequality, $|f(x)| \le \sum_{\theta} |f_\theta(x)| \le N$.  So $U_{N/10} (f)$ is the region where $|f(x)|$ is biggest. The bound in Corollary \ref{heavy} is sharp, as we can see from the following example.

\begin{exam} Let $f(x)$ be the exponential sum

$$ f(x) = \sum_{n=1}^N e^{2 \pi i \left( \frac{n}{N} x_1 + \frac{n^2}{N^2} x_2 \right)}. $$

Each $f_\theta$ is a single term in the sum, and so $\| f_\theta \|_{L^\infty} = 1$.  

We can check directly that $f(mN, 0) = N$ for any integer $m$ because each term in the sum is 1.  Also if $x$ lies in a ball of radius 1/100 around $(mN, 0)$, then each term in the sum has real part more than 1/2, and so $|f(x)| \ge N/2$.  Therefore, $U_{N/10}(f) \cap B_{N^2}$ contains $\sim N$ balls of $\sim 1$ and has measure $\gtrsim N$.  

\end{exam}

We will give a rough sketch of the proof of Corollary \ref{heavy}.
The proof of Corollary \ref{heavy} is simpler than the whole proof of Theorem \ref{decpar}, but it shows most of the main ideas.  

\subsection{Orthogonality} \label{subsecorth}

Under the hypotheses of Corollary \ref{heavy}, it may well happen that $|f_\theta(x)| \sim 1$ for every $x$ and every $\theta$.  To prove Corollary \ref{heavy}, we need to show that for most points $x \in B_{N^2}$, there is a lot of cancellation in the sum $f(x) = \sum_\theta f_\theta(x)$.  The most fundamental tool for proving cancellation in Fourier analysis is orthogonality.  Since the sets $\theta$ are disjoint, the functions $f_\theta$ are orthogonal, and so

$$ \int_{\RR^2} |f|^2 = \sum_\theta \int_{\RR^2} |f_\theta|^2. $$

\noindent The functions $f_\theta$ are exactly orthogonal on $\RR^2$.  They are also approximately orthogonal over any sufficiently large set.  
Since the distance between any two (non-adjacent) $\theta$'s is at least $1/N$, the functions $f_\theta$ are morally orthogonal on any ball of radius $N$.  The rough reason for this approximate orthogonality is the following.  Suppose $\omega_1 \in \theta_1$ and $\omega_2 \in \theta_2$.  We have to check that the functions $e^{2 \pi i \omega_1 x}$ and $e^{2 \pi i \omega_2 x}$ are approximately orthogonal on a ball $B_N(x_0)$.  The inner product of $e^{2 \pi i \omega_1 x}$ and $e^{2 \pi i \omega_2 x}$ on $B_N(x_0)$ is

$$ \int_{B_N(x_0)} e^{2 \pi i \omega_1 x}  \overline{e^{2 \pi i \omega_2 x}} dx =  \int_{B_N(x_0)} e^{2 \pi i (\omega_1 - \omega_2) x}  dx.$$

\noindent Since $|\omega_1 - \omega_2| \ge 1/N$, the function $e^{2 \pi i (\omega_1 - \omega_2) x}$ oscillates significantly on $B_N(x_0)$, which causes some cancellation in that integral.  This approximate argument suggests the following heuristic.

\begin{heur} \label{approxorth} (Approximate orthogonality) If $B$ is a square box of side length at least $N$, then

$$ \int_B |f|^2 dx \approx \sum_\theta \int_B |f_\theta|^2 dx. $$

\end{heur}

As written, this heuristic is not quite true, but there are more technical substitutes for it.  It is morally true, and it helps to imagine it in our proof sketch.  

By approximate orthogonality

$$ \int_{B_{N^2}} |f|^2 dx \approx \sum_\theta \int_{B_{N^2}} |f_\theta|^2 dx \le C N |B_{N^2}| = C N^{5}. $$

This gives an upper bound

\begin{equation} \label{bound1} |U_{N/10}(f) \cap B_{N^2}| \le C N^{3}. \end{equation}

To prove Corollary \ref{heavy}, we will have to improve the bound $N^3$ to $N^{1 + \eps}$.  

So far, we have only used that the rectangles $\theta$ are disjoint (and separated by at least $1/N$).  We will have to use more information about the $\theta$ in order to do better.  In fact, if the rectangles $\theta$ were laid out along a straight line, then bound (\ref{bound1}) would be best possible.  (We can see that by considering the exponential sum $f(x) = \sum_{n=1}^N e^{2 \pi i \frac{n}{N} x_1}$. )   To do better, we will have to take advantage of the way the rectangles $\theta$ follow the curve of the parabola.  In the next two subsections we set up some basic tools that will allow us to take advantage of the curvature of the parabola.

\subsection{Multiple scales} \label{subsecmultscales}

We want to study $f = \sum_\theta f_\theta$.  We can divide this sum into pieces in various ways.  If $M < N$, then we can cover $\Omega$ by $M$ rectangles $\tau$ of dimensions $M^{-1} \times M^{-2}$.  Imagine that $M$ divides $N$ so that each $\theta$ is contained in exactly one $\tau$.  Then we can write

$$f_\tau = \sum_{\theta \subset \tau} f_\theta, $$

$$f = \sum_\tau f_\tau. $$

In order to get better bounds for $f$, we will consider the functions $f_\tau$ at many different intermediate scales (many different choices of $M$).  

The number of $\theta \subset \tau$ is $\frac{N}{M}$, and so $|f_\tau(x)| \le \frac{N}{M}$.  We define $N_\tau = \frac{N}{M}$, which is the number of $\theta$ in $\tau$.

Since $|f(x)| \le \sum_\tau |f_\tau(x)|$, we see that if $|f(x)| \ge N/10$, then $|f_\tau(x)| \ge \frac{N_\tau}{20}$ for at least $M/20$ different $\tau$.  This suggests studying $U_{N_\tau/20} (f_\tau)$ for each $\tau$.  

We can use orthogonality (Lemma \ref{approxorth}) to bound $|U_\lambda (f_\tau)|$.  By itself, this will not lead to any new bounds.  In addition to that we will study the shape of $U_\lambda (f_\tau)$.  Because of their shapes, the sets $U_{N_\tau/20}(f_\tau)$ cannot overlap too much.  This geometric input will lead to improvement on the bound for $U_{N/10}(f)$.

\subsection{Wave packets} \label{subsecwavepack}

Suppose that $\tau \subset \RR^2$ is a rectangle.  Suppose that $\hat f_\tau$ is supported in $\tau$.  Then $f_\tau$ itself has a special geometric structure, which is called a wave packet decomposition.

This wave packet decomposition is based on a tiling of $\RR^2$ which is in some sense dual to $\tau$.  First, let $\tau^*$ be the dual rectangle.  If $\tau$ has dimensions $M^{-1} \times M^{-2}$, then $\tau^*$ would have dimensions $M \times M^2$.  The axis of $\tau^*$ with length $M^2$ corresponds to the axis of $\tau$ with length $M^{-2}$.  Next, let $\TT_\tau$ be a tiling of $\RR^2$ by rectangles congruent to $\tau^*$.

% Pictures

\begin{heur} \label{locconst} (Locally constant heuristic) If $\hat f_\tau$ is supported on a rectangle $\tau$ with center $\omega_\tau$, then for each rectangle $T \in \TT_\tau$,

$$ f_\tau(x) \approx a_T e^{2 \pi i \omega_\tau x}, $$

\noindent where $a_T \in \CC$ is a constant.  In particular, 

$$|f_\tau(x)| \textrm{ is approximately constant on each rectangle $T \in \TT_\tau$}. $$

\end{heur}

According to this heuristic, we can describe $f_\tau$ on all of $\RR^2$ in the form

\begin{equation} \label{wavepacketdec} f_\tau(x) \approx \sum_{T \in \TT_\tau} a_T e^{2 \pi i \omega_\tau x} \chi_T.   \end{equation}

\noindent Here $\chi_T$ is the characteristic function of $T$ (or a smoothed out version of it).  Each term on the right-hand side is called a wave packet, and the equation (\ref{wavepacketdec}) is called the wave packet decomposition of $f_\tau$.

This heuristic is again not quite literally true, but it can be replaced by more technical statements that are true.  It is morally true.

One origin of wave packet decompositions is particle-wave duality in quantum mechanics.  If $\hat f$ is supported in the parabola $P$, then $f$ satisfies the Schrodinger equation, which describes a quantum mechanical particle moving in a vacuum.  (Here we have $f(x_1, x_2)$, and we think of $x_2$ as the time variable $t$.)   Quantum mechanical particles can behave almost like classical particles for significant time periods.  A classical particle in a vacuum moves with constant velocity, tracing out a straight line in space time.  A single wave packet describes a quantum mechanical particle behaving almost classically.

% Picture.

Let us try to give some idea why the locally constant heuristic makes sense.  The Fourier transformation behaves in a nice way with respect to linear changes of variables and translations.  Because of this, it actually suffices to understand the wave packet decomposition when $\tau$ is the square $[-1,1]^2$.  Also the wave packet decomposition makes sense in any dimension, and the proofs are basically the same.  For simplicity, let us consider dimension 1.  Now we have a function $f: \RR \rightarrow \CC$ with $\hat f$ supported in $[-1,1]$.  The wave packet decomposition, Equation (\ref{wavepacketdec}), says that $f(x)$ is roughly constant on each unit interval.
This vague statement is closely related to the the Whittaker-Shannon-Nyquist interpolation theorem, which says that if $\hat f$ is supported in $[-1,1]$, then the whole function $f(x)$ can be recovered from the values $f(n/2)$, with $n \in \ZZ$.  Informally, this suggests that ``nothing significant is happening on length scales smaller than 1/2".   Here is another way to think about it.  Since $\hat f$ is supported in $[-1,1]$, 

$$ f(x) = \int_{-1}^1 \hat f(\omega) e^{2 \pi i \omega x} dx. $$

\noindent For $|\omega| \le 1$, each function $e^{2 \pi i \omega x}$ varies slowly, and looks roughly constant on any scale significantly smaller than 1.  The function $f$ itself is a linear combination of these slowly varying functions, and so we may hope that $f$ also looks roughly constant at scales smaller than 1.  

\subsection{Transversality} \label{subsectransver}

We are now ready to return to the proof sketch of Corollary \ref{heavy}.  By bringing into play the wave packet structure of $f_\tau$, we will see how to improve on the bound from subsection \ref{subsecorth}, which only used orthogonality.  At this point, the curvature of the parabola will come into play.  

Recall that each $\theta$ is an $N^{-2} \times N^{-1}$ rectangle in the $N^{-2}$-neighborhood of the truncated parabola $P$.  By the hypotheses of Corollary \ref{heavy}, we know that $\| f_\theta \|_{L^\infty(\RR^2)} \le 1$.  We want to bound $U_{N/10}(f) \cap B_{N^2}$.

As in subsection \ref{subsecmultscales}, set $M = N^{1/2}$, and cover the parabola with $M$ rectangles $\tau$ of dimensions $M^{-1} \times M^{-2}$.  Set $N_{\tau} = N/M = N^{1/2}$, and let us try to understand $U_{N_\tau/20}(f_\tau)$ for each $\tau$.  We know that $|f_\tau|$ is locally constant on translates of $\tau^*$, which have dimensions $M \times M^2 = N^{1/2} \times N$.  We can also use orthogonality to estimate $\int_{B_N} |f_\tau|^2 dx$ for each ball $B_N$ of radius $N$.  Putting together this information, we conclude that for each $B_N$, $U_{N_\tau/10}(f_\tau) \cap B_N$ is contained in $\lesssim 1$ translates of $\tau^*$.  In other words, on each $B_N$, each $f_\tau$ has only around 1 wave packet of amplitude $\sim N_\tau$.  

Now we are ready to take advantage of the curvature of the parabola.  Because of the curvature of $P$, the rectangles $\tau$ are oriented in different directions, and so the dual rectangles $\tau^*$ point in different directions.  
% Picture
On each $B_N$, $U_{N_\tau/10}(f_\tau)$ is essentially one translate of $\tau^*$.  Because all these rectangles point in different directions, they don't overlap very much.  The set $U_{N/10}(f)$ should lie in $U_{N_\tau/20}(f_\tau)$ for most $\tau$, and so $U_{N/10}(f) \cap B_N$ has to lie in a constant number of balls of radius $N^{1/2}$.  This geometric observation allows us to improve the bound for $U_{N/10}(f)$ beyond what we got from orthogonality alone.

The most effective way to study $U_{N/10}(f)$ on each of these balls of radius $N^{1/2}$ is to repeat the same method, using larger $\tau$'s with $M = N^{1/4}$.  Continuing in this way through many scales, we eventually see that $U_{N/10}(f) \cap B_N$ has to lie in at most $N^\eps$ balls of radius 1.  This gives an upper bound

\begin{equation} \label{bound2} |U_{N/10}(f) \cap B_{N^2}| \le C_\eps N^{2+\eps}. \end{equation}

We will call the argument in this section the orthogonality/transversality method, because those are the two main tools that go into it.  This argument is essentially due to Bennett-Carbery-Tao \cite{BCT}.  We will discuss their work more in section \ref{subsecmulti} below.  The orthogonality/transversality method improves on just orthogonality, but 
to prove Corollary \ref{heavy}, we will have to improve the bound $N^{2 + \eps}$ to $N^{1 + \eps}$.  

\subsection{Induction on scales and transversality together} \label{subsecindplustrans}

To get the sharp bound in Corollary \ref{heavy}, Bourgain and Demeter combined the ideas from the last subsection with induction on scales (as in Section \ref{secindscales}).  As in the last subsection, we set $M = N^{1/2}$ and cover the parabola with $M$ rectangles $\tau$ of dimensions $M^{-1} \times M^{-2}$.  In the orthogonality/transversality argument, we had to understand $U_{N_\tau/20}(f_\tau)$, and we controlled it with the following observation:

$$ \textrm{ (1) Local orthogonality gives an upper bound on $| U_{N_\tau/20}(f_\tau) \cap B_N |$ for each box $B_N$ of side length $N$.} $$

We can also bring into play induction on scales.  After a change of variables, estimating $|U_{N_\tau}(f_\tau)|$ is equivalent to our original problem, Corollary \ref{heavy}, but with $N^{1/2}$ rectangular tiles instead of $N$ tiles.  So we can also use induction on scales to bound $|U_{N_\tau}(f_\tau)|$.  

$$ \textrm{ (2) Induction on scales gives an upper bound on $|U_{N_\tau}(f_\tau) \cap B_{N^2}|$.} $$

The proof of decoupling in \cite{BD} uses (1) and (2) together.  Combining (1) and (2) leads to the sharp bound in Corollary \ref{heavy}

When I was first reading the proof of decoupling, I was surprised and even troubled that combining induction on scales with orthogonality/transversality is so powerful.  The orthogonality/transversality method gives an interesting but non-optimal bound.  Induction on scales by itself does not give any bound.  Why do these ingredients become so much stronger when we mix them together?

Initially, the argument even felt fishy to me.  Let's look back at points (1) and (2) above.  Why should we combine them?  If (2) is stronger than (1), then why not just use (2)?  If (1) is stronger than (2), then why not just use (1)?  I gradually realized that (1) and (2) give different types of information about $U_{N_\tau/20}(f_\tau)$.  Neither one is stronger than the other.  They are different and they give complementary information.

Induction on scales gives information about the total measure of $U_{N_\tau/20}(f_\tau)$ in $B_{N^2}$.  Local orthogonality also implies a bound on the total measure of $U_{N_\tau/20}(f_\tau)$ in $B_{N^2}$.  The bound on the total measure coming from induction is stronger than the bound coming from orthogonality.  But (1) is a local bound: it bounds $|U_{N_\tau/20}(f_\tau) \cap B_N|$ for each box of side $N$.  For a small box $B_N$ of side length $N$, the bound on $|U_{N_\tau/20}(f_\tau) \cap B_N|$ coming from (1) is stronger than the bound coming from (2).  Induction on scales controls the total measure of $U_{N_\tau/20}(f_\tau)$, and local orthogonality forces $U_{N_\tau/20}(f_\tau)$ to be rather spread out.

%Let us illustrate this with three pictures, which illustrate what the different tools tells us about $U_{N_\tau/20}(f_\tau)$:

%First.  Suppose we only knew local orthogonality (1).  Then $U_{N_\tau/20}(f_\tau)$ may look as follows:

%Picture 1.

%Second. Suppose we only knew induction on scales (2).  Now we have a stronger bound for $|U_{N_\tau/20}(f_\tau)|$, but we have no information about its shape.  Then $U_{N_\tau/20}(f_\tau)$ may look as follows:

%Picture 2.

%Finally, suppose we know both (1) and (2).  We now have the best of both worlds: the strong bound on the total measure from (2) plus the bounds on the local structure from (1).  Now $U_{N_\tau/20}(f_\tau)$ must look as follows:

%Picture 3.

To summarize, the bound (2) from induction gives the best information about the measere of $U_{N_\tau/20}(f_\tau)$.  But the bound (1) from local orthogonality gives us additional information about the shape of $U_{N_\tau/20}(f_\tau)$: in particular, for any box $B_N$ of side length $N$, $U_{N_\tau/20}(f_\tau) \cap B_N$ consists of at most a constant number of $N^{1/2} \times N$ rectangles.

Now we have digested the information that (1) and (2) give us about $f_\tau$, for each $\tau$.  The reader may wonder why information about the shape of $U_{N_\tau/20}(f_\tau)$ helps to bound the measure of $U_{N/10}(f)$.  The point is that it is difficult for different functions $f_{\tau_1}$ and $f_{\tau_2}$ to be large in the same place.  Notice that if $|f(x)| = |\sum_\tau f_\tau(x)|$ is large, then we must have $|f_\tau(x)|$ large for many different $\tau$ at the same point $x$.  If we knew (2) but not (1), it would be possible for $U_{N_\tau/20}(f_{\tau_1})$ and $U_{N_\tau/20}(f_{\tau_2})$ to be equal to each other.  But if we use (1) and (2) together, then we get a much stronger estimate for the measure of the intersection$U_{N_\tau/20}(f_{\tau_1}) \cap U_{N_\tau/20}(f_{\tau_2})$.

%Picture 4.

Here is another way to think about the leverage we get by adding induction on scales to the transversality/orthogonality argument from subsection \ref{subsectransver}.  Recall that we covered our original tiles $\theta$ with $M$ rectangles $\tau$ with dimensions $M^{-1} \times M^{-2}$, and we considered $f_\tau$.  In the argument from subsection \ref{subsectransver}, we started by picking $M = N^{1/2}$.  Continuing through the argument, we then used $M = N^{1/4}$, then $M = N^{1/8}$ and so on.  At each of these scales, we used the wave packet structure of the $f_\tau$ and we took advantage of transversality between the wave packets of the different $f_\tau$'s.

When we add in induction on scales, we are implicitly considering many different scales.  We started as before by using the scale $M = N^{1/2}$.  When we apply induction to a given $f_\tau$, and we unwind the induction, then we are really applying the same argument to $f_\tau$.  When we apply the argument to $f_\tau$, it gets decomposed as $f_\tau = \sum_\gamma f_\gamma$, where each $\gamma$ contains $N^{1/4}$ of the $\theta$ in $\tau$.  The total number of $\gamma$ covering all the different $\tau$'s is $M = N^{3/4}$, a scale that we never used in subsection \ref{subsectransver}.  If we fully unwind the inductive argument, it brings into play wave packets at every scale.  And it takes advantage of transversality between wave packets at every scale.  In some sense, the extra power comes from using transversality at every scale instead of just the special scales $M= N^{1/2}, N^{1/4}, N^{1/8},...$ which were used in subsection \ref{subsectransver}.

\subsection{Final comments}

As we mentioned earlier, there are a number of different proofs of decoupling.  In \cite{Li1}, Zane Li gave a new proof of Theorem \ref{decpar} based on Wooley's method of efficient congruencing (cf. \cite{Woo1}, \cite{Woo2}, \cite{Woo3}).  In \cite{GMW}, Maldague, Wang, and I gave a new proof of Theorem \ref{decpar} based on ideas from projection theory in geometric measure theory such as Orponen's work \cite{O}.  
One common feature of all these proofs is to bring into play $f_\tau$ with $\tau$ at every scale, and to take advantage of some type of transversality at every scale.  
%In most of the proofs, the appearance of all these scales is a little under the surface, because you have to unwind the induction in order to find most of the scales.

Vinogradov's work on the mean value conjecture \cite{V} already has this key feature: it uses $f_\tau$ for $\tau$ of every scale (after unwinding the induction) and takes advantage of some type of transversality at every scale.  Vinogradov's work \cite{V} is the first work I am aware of to take advantage of many scales of $\tau$ in estimating an exponential sum.  Within harmonic analysis, Wolff's work on local smoothing \cite{W4} used this key feature.  Bourgain's work on the restriction problem \cite{B1} took advantage of the transversality of wave packets of $f_\tau$ for a single scale of $\tau$.  Wolff's work \cite{W4} introduced a version of induction on scales which allowed him to take advantage of transversality of wave packets at every scale.  Using this method, he proved a decoupling theorem (for the cone) for large exponents $p$.  

The papers \cite{V} and \cite{W4} prove estimates for $|U_\lambda(f)|$ which are sharp when $\lambda$ takes the largest possible value, but not sharp for smaller $\lambda$.  For instance, the methods of \cite{V} or \cite{W4} could prove Corollary \ref{heavy}.  The advantage of Theorem \ref{decpar} is to give sharp estimates for $|U_\lambda(f)|$ for every $\lambda$.  For simplicity, we illustrated the method with $\lambda = N/10$, the largest possible value.  The same general method works for every value of $\lambda$, although there are some extra wrinkles in the argument.

\section{The Kakeya conjecture} \label{seckakeya}

In this section, we discuss why the restriction conjecture, Conjecture \ref{conjrestriction}, remains out of reach in dimension $n \ge 3$.  As we saw in the last section, Fourier analytic estimates in restriction theory are related to understanding how much rectangles pointing in different directions can overlap each other.  The Kakeya conjecture is a precise question about how much rectangles pointing in different directions can overlap each other.  (Actually, there are several related conjectures.)

Let us formulate the Kakeya conjecture in a way that connects with our discussion of wave packets.  Recall that $P \subset \RR^n$ denotes the truncated paraboloid:

$$ P = \{ \omega \in \RR^n : \omega_n = \sum_{j=1}^{n-1} \omega_j^2 \textrm{ and } 0 \le \omega_n \le 1 \}. $$

Cover $P$ with $N^{n-1}$ rectangular boxes $\theta$ of dimensions $\frac{1}{N} \times ... \times \frac{1}{N} \times \frac{1}{N^2}$.  For each $\theta$, let $\theta^*$ denote the dual box with dimensions $N \times  ... \times N \times N^2$.  The long direction of $\theta^*$ is equal to the short direction of $\theta$.  For each $\theta$, let $T_\theta$ denote a translate of $\theta^*$.

The tubes $T_\theta$ are related to wave packets that occur in the restriction problem.  In the restriction problem, we consider a function $f$ of the form 

\begin{equation} \label{deffP2} f(x) = \int_P a(\omega) e^{2 \pi i \omega x} d \mu_P(\omega). \end{equation}

\noindent The restriction problem asks to estimate $\| f \|_{L^p(\RR^n)}$ assuming that $|a(\omega)| \le 1$ for every $\omega$.  We can decompose $f$ as $f = \sum_\theta f_\theta$ where

\begin{equation}  f_\theta(x) = \int_{P \cap \theta} a(\omega) e^{2 \pi i \omega x} d \mu_P(\omega). \end{equation}

 Heuristically, each function $f_\theta$ is organized into wave packets, and in particular $|f_\theta|$ is locally constant on translates of $\theta^*$.  So the tubes $T_\theta$ correspond to wave packets of $f$.  Understanding how much the wave packets overlap helps to estimate $\| f \|_{L^p}$.

Now we are ready to formulate one version of the Kakeya conjecture.

\begin{conj} \label{conjkakeyavol} (Kakeya conjecture for volume) Suppose $n \ge 2$.  For each $\theta$ in the covering of $P \subset \RR^n$, let $T_\theta$ be a translate of $\theta^*$.  Then for each $\eps > 0$, 

$$ | \cup_\theta T_\theta | \ge C(n, \eps) N^{-\eps} \sum_\theta | T_\theta |. $$

\end{conj}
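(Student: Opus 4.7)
The plan is to reformulate the volume bound as an $L^p$ estimate for the Kakeya maximal function and attack it via induction on scales in the spirit of the decoupling proofs of Section 3, combined with the multilinear Kakeya inequality of Bennett-Carbery-Tao. Setting $f = \sum_\theta \chi_{T_\theta}$, one has $\int f = \sum_\theta |T_\theta|$, so via H\"older's inequality and dyadic pigeonholing in the level sets of $f$, a lower bound $|\supp f| \ge c N^{-\eps} \sum_\theta |T_\theta|$ is morally equivalent to an $L^{n/(n-1)}$ bound of the form $\|f\|_{L^{n/(n-1)}}^{n/(n-1)} \le C N^\eps \sum_\theta |T_\theta|$. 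When $n=2$ this is classical (due to C\'ordoba); the interesting case is $n \ge 3$, which is famously open, so the outline below is the strategy that has driven the known partial results rather than a finished proof.

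Next I would set up induction on scales in the spirit of Lemma \ref{submult}. Fix an intermediate scale $M$ with $1 \le M \le N$ and cover $P$ with coarser caps $\tau$ of dimensions $M^{-1} \times \cdots \times M^{-1} \times M^{-2}$; each $\theta$ lies in a unique $\tau$, and the thin tube $T_\theta$ is contained in a fat tube $T_\tau^*$ of dimensions $M \times \cdots \times M \times M^2$. Inside each such fat tube, a parabolic rescaling turns the collection $\{T_\theta : \theta \subset \tau\}$ into a Kakeya configuration at the smaller scale $N/M$, so the inductive hypothesis controls the overlap of the nearly parallel thin tubes within one direction class. The job of the remainder of the argument is then to bound interactions between thin tubes drawn from different direction classes.

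For this transversal step I would invoke a broad/narrow dichotomy in the style of Bourgain-Guth: at each point $x$, either the tubes through $x$ are concentrated near a lower dimensional subvariety of $P$ (the narrow case, handled by a lower dimensional Kakeya bound and recursion), or the directions through $x$ contain an $n$-tuple of $\sim 1$-transverse vectors (the broad case, where the multilinear Kakeya inequality of Bennett-Carbery-Tao supplies an essentially sharp $L^{n/(n-1)}$ bound with an $N^\eps$ loss). Iterating the dichotomy across every intermediate scale $M = N^{1/2}, N^{1/4}, \ldots$ plays the same multi-scale role here as the wave packet argument in Subsection \ref{subsecindplustrans}.

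The main obstacle, and the reason Conjecture \ref{conjkakeyavol} remains open for $n \ge 3$, is controlling configurations of tubes that cluster in a sticky or plany way at many scales simultaneously. Orthogonality, transversality, and multilinear Kakeya together still permit tubes to organize along thin neighborhoods of low-degree algebraic varieties without producing the overlap needed to force the desired lower bound on $|\cup_\theta T_\theta|$, so the broad/narrow induction by itself loses a small constant at each of $\log N$ scales and fails to close. A successful proof would require a genuinely new quantitative sticky/plany/graininess structural theorem, in the spirit of Katz-Laba-Tao and the recent Wang-Zahl work on the $n=3$ case, and essentially all the difficulty would lie in that structural analysis rather than in the inductive scaffolding above.
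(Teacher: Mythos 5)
This statement is not a theorem of the paper and has no proof there to compare against: it is the Kakeya conjecture for volume, which the survey records as open for every $n \ge 3$ (for $n = 2$ it cites Davies's sharp bound with a $\log N$ loss, and for $n = 3$ the best known result is $|\cup_\theta T_\theta| \ge \frac{c}{N^{1/2 - \eps_0}} \sum_\theta |T_\theta|$, due to Katz--Laba--Tao under an extra hypothesis and Katz--Zahl in general). So your decision not to claim a proof is the correct call, and your opening reduction is sound as a one-way implication: by H\"older, $\sum_\theta |T_\theta| \le |\cup_\theta T_\theta|^{1/n} \, \| \sum_\theta \chi_{T_\theta} \|_{L^{n/(n-1)}}$, so an $L^{n/(n-1)}$ bound with an $N^\eps$ loss does give the volume bound (the converse, your ``morally equivalent,'' needs more, e.g.\ restricted weak-type arguments over subfamilies). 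Your survey of the induction-on-scales, broad/narrow, multilinear-Kakeya strategy is an accurate picture of the partial results and is consistent with the paper's own discussion of why the problem resists these tools.

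One concrete soft spot worth flagging, because it is exactly where the known arguments die rather than merely ``losing a small constant at each scale'': you dispose of the narrow case by ``a lower dimensional Kakeya bound and recursion.'' For Kakeya, unlike for restriction or decoupling, the narrow branch is not controlled by a lower-dimensional estimate: a family of tubes clustering in a thin neighborhood of a plane or of a low-degree variety (the ``plany'' enemy) is not a lower-dimensional Kakeya configuration in any usable sense, and this is precisely the regime of the almost counterexample the paper describes, built from $H = \{(z_1,z_2,z_3) \in \CC^3 : |z_1|^2 + |z_2|^2 - |z_3|^2 = 1\}$, whose tubes are compatible with orthogonality, transversality, multilinear Kakeya, and Wolff's hairbrush argument, yet satisfy $|\cup_j T_j| \sim N^{-1/2} \sum_j |T_j|$. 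So the failure is not an accumulation of $\eps$-losses over $\log N$ scales; the narrow case is genuinely unbounded without new structural input of sticky/plany/grainy type (Katz--Laba--Tao, Katz--Zahl, and the recent Wang--Zahl work in $n=3$ that you cite). Your final paragraph essentially acknowledges this, so as an assessment of the state of the problem your write-up is fair; just locate the obstruction in the narrow branch itself rather than in the inductive bookkeeping. (Minor attribution point: the paper credits the two-dimensional volume bound to Davies; C\'ordoba's result is the $L^2$ maximal-function version.)
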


An argument of Fefferman \cite{F2} shows that the restriction conjecture implies the Kakeya conjecture.  If a set of tubes $\{ T_\theta \}$ is a counterexample to the Kakeya conjecture, we could build a counterexample to the restriction conjecture by choosing $f_\theta$ to concentrate on a single wave packet supported on $T_\theta$.  

Around 1920, Besicovitch constructed a remarkable example in 2 dimensions where $| \cup_\theta T_\theta | \sim \frac{1}{\log N} \sum_\theta | T_\theta |$.  Fefferman used this construction in \cite{F2} to give a counterexample to a cousin of the restriction conjecture called the ball multiplier problem.  

When $n=2$, Besicovitch's construction turns out to be tight: Davies proved that $| \cup_\theta T_\theta | \ge \frac{c}{\log N} \sum_\theta | T_\theta |$.  If $n \ge 3$, Besicovitch's construction still works, but we don't know good bounds in the other direction.  For example, if $n=3$, then Davies's method gives only 

$$| \cup_\theta T_\theta | \ge \frac{c}{N} \sum_\theta | T_\theta |. $$

Bourgain \cite{B1} improved the $\frac{c}{N}$ to $\frac{c}{N^{2/3}}$ and Wolff \cite{W2} improved it further to $\frac{c}{N^{1/2}}$.  At this point, it becomes very difficult to go further.  The best current bound is

$$| \cup_\theta T_\theta | \ge \frac{c}{N^{1/2 - \eps_0}} \sum_\theta | T_\theta |, $$

\noindent where $\eps_0$ is a small positive constant.  The proofs don't make $\eps_0$ explicit, but the best value given by current techniques is probably around $1/1000$.  This estimate was proven under an extra assumption by Katz-Laba-Tao \cite{KLT} and then proven in full generality by Katz-Zahl \cite{KZ}.  The arguments of \cite{B1} and \cite{W2} are fairly short, about five pages each, but the arguments of \cite{KLT} and \cite{KZ} are much more complex, about 50 pages each.

The reason that it is very difficult to improve on $\frac{c}{N^{1/2}}$ has to do with an "almost counterexample" which takes place in $\CC^3$.  This almost counterexample was first described in \cite{KLT}.  Consider the set

$$ H = \{ (z_1, z_2, z_3) \in \CC^3: |z_1|^2 + |z_2|^2 - |z_3|^2 = 1 \}. $$

\noindent This set is a 5-dimensional real manifold in $\CC^3$.  Its key feature is that it contains many complex lines.  Each point of $H$ lies in infinitely many complex lines contained in $H$.  Using this set $H$ as a guide, \cite{KLT} constructed a set of ``complex tubes" $T_j$ with ``dimensions" $N \times N \times N^2$, where $| \cup_j T_j | = \frac{c}{N^{1/2}} \sum_j |T_j|$.  These tubes overlap each other in a very intricate way.  They are complex tubes instead of real tubes, and they don't actually all point in different directions, but Wolff's argument from \cite{W2} does apply to them.  To beat the Kakeya estimate from \cite{W2}, one has to introduce into the argument some tool that rules out this "almost counterexample".  The papers \cite{KLT} and \cite{KZ} succeed in doing this, but the tools are much more complex and the quantitative bounds are rather weak.  It would be major progress in the field to give a good quantitative improvement to the Kakeya bound in \cite{W2}, let alone proving the Kakeya conjecture in full.

There is also a stronger version of the Kakeya conjecture which involves $L^p$ norms.  This version is important for the coming subsection.

\begin{conj} \label{conjkakeyaLp} (Kakeya conjecture for $L^p$ norms) Suppose $n \ge 2$.  For each $\theta$ in the covering of $P \subset \RR^n$, let $T_\theta$ be the characteristic function of translate of $\theta^*$, and let $T_{\theta, 0}$ be the characteristic function of $\theta^*$ itself.  The difference is that $\theta^*$ is centered at 0, but $T_\theta$ could have any center.  Then for any $\eps > 0$ and any $p$,

$$ \| \sum_\theta T_\theta \|_{L^p(\RR^n)} \le C(n, \eps) N^{\eps} \|  \sum_\theta T_{\theta,0} \|_{L^p(\RR^n)}. $$

\end{conj}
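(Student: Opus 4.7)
The strategy is to reduce the conjecture to a Kakeya maximal function estimate and to attempt the latter via a combination of level-set analysis, bush/hairbrush arguments, and induction on scales in the style of the decoupling proof. First I would dispense with the trivial endpoints. At $p=1$, both sides equal $\sum_\theta |T_\theta| = N^{n-1}\cdot N^{n+1} = N^{2n}$, so the bound holds with constant $1$. At $p=\infty$, the triangle inequality gives $\|\sum_\theta T_\theta\|_{L^\infty}\le N^{n-1}$, while $\sum_\theta T_{\theta,0}$ attains the value $N^{n-1}$ at the origin (every $\theta^*$ contains $0$), so again the bound holds with constant $1$. Because the two sides are not linearly related in the tube centers, one cannot simply Riesz--Thorin-interpolate; the content is therefore at a single intermediate exponent, dual to the Kakeya maximal exponent $p=n$, from which other exponents can be recovered by log-convexity of each side separately combined with pigeonholing over dyadic scales of $\lambda$.

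Next I would reformulate via superlevel sets. Write $F = \sum_\theta T_\theta$ and $F_0 = \sum_\theta T_{\theta,0}$. The function $F_0$ is essentially explicit: every $\theta^*$ contains a ball of radius $\sim N$ around the origin, so $F_0 \sim N^{n-1}$ on $B_{cN}(0)$, and on annuli of radius $r$ with $N \le r \le N^2$ one can compute $F_0(x)$ by counting which $1/N$-separated directions can reach $x$. This makes $\|F_0\|_p$ known up to constants for every $p$. The task is then to show, for each dyadic level $\lambda$, that $|\{F > \lambda\}| \lesssim N^\eps |\{F_0 > \lambda\}|$. This is precisely the Kakeya maximal function conjecture for the $1/N$-separated family of directions tangent to $P$.

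The main technical step is the Kakeya level-set estimate itself. Here I would follow Wolff's hairbrush template: select a tube $T_{\theta_0}$ containing many points of $E_\lambda := \{F > \lambda\}$, form the hairbrush of all other $T_\theta$ meeting $T_{\theta_0}$, and bound $|E_\lambda|$ using transversality between the hairs. I would try to couple this with induction on scales in the decoupling spirit: pick an intermediate scale $M$, group the $T_\theta$ into bunches inside larger dual tubes $\tau^*$ at scale $M^{-1}\times\dots\times M^{-2}$, apply the inductive hypothesis at scale $M$ to each bunch, and exploit transversality between the differently-oriented $\tau^*$'s to glue the scales together — mirroring the use of (1) and (2) in subsection \ref{subsecindplustrans}.

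The main obstacle is that this plan is the Kakeya conjecture itself, which is open for $n \ge 3$. The best current bounds (Wolff's $1/2$ threshold, Katz--Laba--Tao, Katz--Zahl) fall polynomially short of the target $N^\eps$ loss. The structural obstruction discussed in Section \ref{seckakeya} is the ``almost counterexample'' in $\CC^3$ built from the complex hyperboloid $|z_1|^2+|z_2|^2-|z_3|^2=1$, which contains infinitely many complex lines through every point and defeats any argument that does not somehow rule out complex-line configurations. Absent a genuinely new tool — perhaps a quantitative polynomial method, or an algebraic-geometric structure theorem distinguishing real from complex line incidences — I would not expect the plan above to close the final gap to the conjectured $N^\eps$ bound.
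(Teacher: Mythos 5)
This statement is a conjecture, not a theorem: the paper offers no proof of it, and there is none to compare yours against. Your assessment is the correct one. After the reduction to superlevel sets, the inequality $|\{\sum_\theta T_\theta > \lambda\}| \lesssim N^{\eps} |\{\sum_\theta T_{\theta,0} > \lambda\}|$ is precisely the Kakeya maximal function conjecture for a $1/N$-separated family of directions, which Section \ref{seckakeya} explains is open for every $n \ge 3$: Wolff's hairbrush gives only the $N^{-1/2}$-type threshold, the improvements of Katz--Laba--Tao and Katz--Zahl gain only a small fixed power $\eps_0$, and the complex ``almost counterexample'' built from $\{|z_1|^2+|z_2|^2-|z_3|^2=1\}$ shows that any argument of the hairbrush/transversality type you sketch must import some additional input that distinguishes the real setting before it can go past $N^{1/2}$. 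So the ``gap'' in your proposal is not a repairable flaw in your argument; it is the open problem itself, and your refusal to claim more is exactly right. Two smaller remarks. First, the one case you could have closed is $n=2$, where the $L^p$ Kakeya maximal estimate with only an $N^{\eps}$ (in fact logarithmic) loss is a classical theorem of C\'ordoba, in the spirit of the Davies bound quoted in the paper; since the conjecture is stated for all $n \ge 2$, it is worth noting that the planar case is genuinely known. Second, your claim that all exponents reduce to a single critical one ``by log-convexity of each side separately combined with pigeonholing'' is looser than it should be: interpolation controls $\|F\|_p$ but the right-hand side depends on $p$ through the explicit profile of $F_0$, so one has to check that the conjectured bound at the critical exponent (the one dual to the Kakeya maximal exponent $n$) together with the trivial $p=1$ and $p=\infty$ endpoints really does imply the full range --- this is true and standard, but it uses the computed shape of $\|F_0\|_p$ rather than abstract convexity alone.
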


To digest this formula, notice that $\sum_\theta T_\theta(x)$ is the number of tubes through $x$.  The $p^{th}$ power of the left-hand side is $\int_\RR^n | \sum_\theta T_\theta(x)|^p dx$.  This is large if many points $x$ lie in many tubes from our set of tubes.  So the $L^p$ Kakeya conjecture says that not too many points $x$ can lie in many different tubes.

The restriction conjecture implies this stronger version of the Kakeya conjecture, which in turn implies the Kakeya conjecture for volumes, Conjecture \ref{conjkakeyavol}.  

Bourgain and Demeter proved a sharp decoupling theorem for the paraboloid $P \subset \RR^n$ for all $n$, which they used to give a sharp Strichartz estimate for tori in all dimensions, Theorem \ref{strichtorus}.  One reason this result came as a big surprise has to do with the Kakeya conjecture.  The proof of decoupling for the paraboloid involves estimating how much tubes pointing in different directions overlap.  When $n=2$, we know a great deal about how rectangles in different directions overlap, including the Kakeya conjecture for $n=2$.  But when $n \ge 3$, we don't know the Kakeya conjecture.  Although there was no formal connection between Kakeya and decoupling for the paraboloid, the Kakeya conjecture still made a sharp decoupling theorem in high dimensions seem out of reach, especially for an approach which is heavily based on estimating the overlaps of tubes pointing in different directions.  

\subsection{Multilinear Kakeya} \label{subsecmulti}

The Kakeya-type input into the proof of decoupling is called multilinear Kakeya.  It was formulated and proven by Bennett-Carbery-Tao \cite{BCT}.  Multilinear Kakeya is a cousin of Kakeya.  The setup is a little different, and we will explain it below, but it still gets at the idea that tubes pointing in different directions cannot overlap too much.  Remarkably, Bennett-Carbery-Tao proved sharp multilinear Kakeya estimates in all dimensions.  Their proof was simplified in \cite{Gu1} down to a few pages.

The multilinear Kakeya estimate in $\RR^n$ is an $L^p$ type estimate.  Suppose that $\ell_{j,a} \subset \RR^n$ is a line that makes a small angle with the $x_j$ axis (an angle at most $\frac{1}{100n}$ will do).  Let $T_{j,a}$ be the characteristic function of the unit neighorhood of $\ell_{j,a}$ -- the characteristic function of a tube.  Let $B_R \subset \RR^n$ denote a cube of side length $R$.

\begin{theorem} \label{multkak} (Multilinear Kakeya, \cite{BCT})

$$ \int_{B_R} \prod_{j=1}^n \left( \sum_{a = 1}^{A_j} T_{j,a} (x)  \right)^{\frac{1}{n-1}} dx \le C(n, \eps) R^\eps \prod_{j=1}^n A_j^{\frac{1}{n-1}}. $$

\end{theorem}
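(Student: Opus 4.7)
My plan is to prove the inequality by combining the Loomis-Whitney projection inequality with an induction on scales.

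The starting observation is that if every tube $T_{j,a}$ were exactly parallel to the $x_j$-axis, then the integrand factors through coordinate projections: writing $\pi_j: \RR^n \to \RR^{n-1}$ for the projection that forgets the $j$-th coordinate, we would have $\sum_a T_{j,a}(x) = g_j(\pi_j(x))$ for $g_j = \sum_a \chi_{\pi_j(T_{j,a})}$, a sum of indicators of unit disks. The classical Loomis-Whitney inequality
\[
\int_{\RR^n} \prod_{j=1}^n g_j(\pi_j(x))^{1/(n-1)} dx \le \prod_{j=1}^n \Bigl(\int_{\RR^{n-1}} g_j\Bigr)^{1/(n-1)}
\]
would then give Theorem \ref{multkak} with constant $O_n(1)$ and no $R^\eps$ loss, since $\int g_j \sim A_j$. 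The entire point of the theorem is to control the error coming from the fact that the actual tubes make angles (up to $\tfrac{1}{100n}$) with the coordinate axes rather than being exactly parallel to them.

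To handle that error, I would set up an induction on scales. Let $K(R)$ denote the smallest constant for which the stated inequality holds uniformly over admissible tube configurations on $B_R$. First I would partition $B_R$ into sub-cubes $Q$ of side $R^{1/2}$. Applying the inductive hypothesis to each $Q$ at scale $R^{1/2}$ bounds the local integral by $K(R^{1/2}) \prod_j A_{j,Q}^{1/(n-1)}$, where $A_{j,Q}$ counts direction-$j$ tubes meeting $Q$. Summing over $Q$, the expression $\sum_Q \prod_j A_{j,Q}^{1/(n-1)}$ is (up to a discretization factor of $R^{-n/2}$) the multilinear Kakeya integral on $B_R$ for the $R^{1/2}$-fattened tubes; after rescaling by $R^{-1/2}$ this becomes a multilinear Kakeya quantity at scale $R^{1/2}$ for unit-radius tubes, so applying the inductive hypothesis a second time bounds it by $K(R^{1/2}) \prod_j A_j^{1/(n-1)}$. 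The base case $K(O(1)) = O_n(1)$ is trivial.

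The hard part will be to avoid squaring the constant in the induction. The naive bootstrap just described gives $K(R) \le C(n) K(R^{1/2})^2$, which iterates only to a polynomial bound $K(R) \lesssim R^{O(1)}$ rather than to $R^\eps$ for every $\eps > 0$. To remove this loss one must either refine the reduction so that only one power of $K(R^{1/2})$ appears at each step, or else invoke the heat-flow monotonicity argument of Bennett-Carbery-Tao: replace each $T_{j,a}$ by a Gaussian $G_{j,a,t}$ whose density at $t=0$ is approximately $T_{j,a}$ and which evolves by the heat equation, and show via a Cauchy-Schwarz--type convexity identity that
\[
Q(t) := \int_{\RR^n} \prod_{j=1}^n \Bigl(\sum_{a=1}^{A_j} G_{j,a,t}(x)\Bigr)^{1/(n-1)} dx
\]
is monotone in $t$; then the $t \to \infty$ limit is computed by a Gaussian analogue of Loomis-Whitney, eliminating the $R^\eps$ loss entirely. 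Verifying this monotonicity (essentially a pointwise log-convexity statement for Gaussian sums) is the technical crux, but the large-scale picture is really just Loomis-Whitney plus scale induction.
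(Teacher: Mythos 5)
Your overall architecture---Loomis--Whitney as the exactly-axis-parallel base case, plus an induction on scales to absorb the tilt---is precisely the route the paper indicates (the paper itself only sketches this, deferring to Bennett--Carbery--Tao and to the short induction-on-scales proof it cites as \cite{Gu1}). But as written your argument has a genuine gap, and you have located it yourself: applying the inductive hypothesis both inside each cube of side $R^{1/2}$ and then a second time to the coarse count $\sum_Q \prod_j A_{j,Q}^{1/(n-1)}$ gives only $K(R) \le C K(R^{1/2})^2$, which iterates to a fixed polynomial loss $R^{O(1)}$ and never yields $R^\eps$. Naming two possible repairs does not close this: the heat-flow route is indeed the original Bennett--Carbery--Tao proof, but the monotonicity of your $Q(t)$ is the entire content of that proof (it is not a routine convexity computation; it is where the angular hypothesis enters), and you give no indication of how to verify it.

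The concrete missing idea that closes the induction-on-scales route is that the coarse scale must be handled by genuine Loomis--Whitney, not by the inductive hypothesis, so that only one power of $K$ appears per step. First reduce to tubes whose directions lie within a small angle $\delta = \delta(\eps,n)$ of the coordinate axes: split each $\tfrac{1}{100n}$-cap of directions into $C(n,\delta)$ caps of radius $\delta$, straighten each $n$-tuple of caps by a bounded linear change of variables, and recombine using $(\sum_c x_c)^{1/(n-1)} \le \sum_c x_c^{1/(n-1)}$; this costs only an $\eps$-dependent constant. Then run the induction with bounded scale ratio $\delta^{-1}$ instead of $R \mapsto R^{1/2}$: cover a cube $Q_S$ by cubes $Q_i$ of side $\delta S$, apply the inductive hypothesis in each $Q_i$, and bound $\sum_i \prod_j A_{j,i}^{1/(n-1)}$ by fattening every tube to radius $\sim \delta S$, noting $A_{j,i} \le \sum_a \chi_{\tilde T_{j,a}}(x)$ for all $x \in Q_i$. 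The point is that a $\delta$-tilted line drifts laterally by at most $\sqrt{n}\,\delta S$ across $Q_S$, which is now comparable to the fattening width, so each fattened tube sits inside an exactly axis-parallel tube of radius $O(\delta S)$, and the axis-parallel Loomis--Whitney inequality applies at the top scale with only an $O_n(1)$ loss. This yields $K(S) \le C(n) K(\delta S)$, hence $K(R) \le C(n)^{\log R / \log(1/\delta)} = R^{\log C(n)/\log(1/\delta)} \le C_\eps R^\eps$ once $\delta$ is chosen small in terms of $\eps$. In your $R \to R^{1/2}$ scheme this absorption is impossible, because the drift $R/(100n)$ across $B_R$ vastly exceeds the fattening width $R^{1/2}$; that is exactly why you were forced to invoke the hypothesis twice, and it is the step you would need to replace.
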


\noindent Let's take a moment to digest this estimate.  For a fixed $j$, think of the tubes $\{ T_{j,a} \}_{a=1}^{A_j}$ as tubes ``in direction $j$''.  Now $\sum_{a=1}^{A_j} T_{j,a}(x)$ is the number of tubes in direction $j$ going through $x$.  The integrand is $\prod_{j=1}^n \left( \sum_{a = 1}^{A_j} T_{j,a} (x)  \right)^{\frac{1}{n-1}}$, which is big if $x$ lies in many tubes from each direction.  So the integral on the LHS measures how many points $x$ lie in many tubes from each direction.  The multilinear Kakeya inequality says that there cannot be too many points which lie in many tubes from each direction.

The exponent $\frac{1}{n-1}$ makes the inequality sharp in two natural examples: the example when all the tubes go through the origin and an example when the tubes are arranged in a rectangular grid.  The exponent $\frac{1}{n-1}$ is the most important, and this bound implies sharp estimates with any other exponent.

It makes sense to compare Theorem \ref{multkak} with the $L^p$ Kakeya conjecture, Conjecture \ref{conjkakeyaLp}.  The main difference between them is that in the multilinear Kakeya theorem, the integrand is a product of $n$ factors, and we assume that the $n$ factors are transverse to each other in a strong sense.  The word multilinear refers to this product structure.

Theorem \ref{multkak} is also proven by induction on scales.  In the case that the tubes $T_{j,a}$ are exactly parallel to the $x_j$ axis (for all $j$ and $a$), Theorem \ref{multkak} reduces to the Loomis-Whitney inequality \cite{LW}, which we will recall a moment.  The general case of multilinear Kakeya is proven by applying Loomis-Whitney at many scales (cf. \cite{Gu1}).  The multilinear Kakeya inequality grew out of work by Bennett-Carbery-Wright on non-linear versions of the Loomis-Whitney inequality \cite{BC}.  

For completeness let us recall the statement of the Loomis-Whitney inequality.  One version is an inequality for integrals that looks reminiscent of Holder's inequality.  Suppose that $\pi_j: \RR^n \rightarrow \RR^{n-1}$ are projections onto the coordinate hyperplanes.  Then the Loomis-Whitney inequality says

$$ \int_{\RR^n} \prod_{j=1}^n f_j (\pi_j(x))^{\frac{1}{n-1}} dx \le \prod_{j=1}^n \| f_j \|_{L^1(\RR^{n-1})}^{\frac{1}{n-1}}. $$

\noindent There is a geometric corollary of this inequality which may feel more intuitive.  Suppose that $U \subset \RR^n$ is an open set, and that the projection of $U$ onto every coordinate hyperplane has $(n-1)$-volume at most $A$.  Then $U$ has $n$-volume at most $A^{\frac{n}{n-1}}$.  The case $n=2$ is straightforward, but the case $n=3$ is quite subtle.  It is one of my favorite problems to think through with students studying analysis.

When multilinear Kakeya was first proven, it seemed natural and remarkable, but it wasn't clear just how much impact it would have in restriction theory.  In \cite{BCT}, Bennett, Carbery, and Tao \cite{BCT} formulated and proved an interesting multilinear restriction conjecture.  They proved multilinear restriction by using multilinear Kakeya at many scales.  But it wasn't clear whether these multilinear estimates would lead to bounds on problems that were not multilinear, such as the original restriction conjecture.   

The paper \cite{BG} used these multilinear estimates to prove new partial results about the restriction problem.  It introduced a technique called the broad / narrow method which can sometimes reduce linear estimates to multilinear estimates.  

Remarkably, sharp decoupling theorems follow from multilinear Kakeya, even though there is nothing obviously multilinear about the statement of decoupling.  This was one of the big surprises in the development of the field.  The original Kakeya problem is much harder than multilinear Kakeya.  The original restriction problem is much harder than multilinear restriction.  There is also a multilinear version of decoupling.  A key fact that makes decoupling accessible is that the original decoupling problem is EQUIVALENT to multilinear decoupling.  This equivalence was noticed implicitly by Bourgain in \cite{B4}, and explicitly by Bourgain and Demeter in \cite{BD}.   Because of this connection between decoupling and multilinear decoupling, we can prove sharp estimates for the original decoupling problem using multilinear Kakeya, even though we don't know sharp estimates for the original Kakeya problem.

The connection between decoupling and multilinear decoupling is another important application of induction on scales.  It is based on the broad/narrow method.  Because of considerations of space, we don't give a detailed description here. %???

When multilinear Kakeya first appeared, it seemed like it might not have very many applications in harmonic analysis compared with the original Kakeya conjecture.  But now the situation has reversed: multilinear Kakeya currently has more applications in harmonic analysis than the original Kakeya conjecture would have even if we knew it.

\section{Applications of decoupling in harmonic analysis}

Decoupling theory has led to the solutions of several longstanding problems in harmonic analysis.  We give three examples here.  Each of these problems seemed out of reach a decade ago.

\subsection{The helical maximal function}
 
Hardy and Littlewood introduced their maximal function in the early 20th century.  The Hardy-Littlewood maximal function is based on averages over balls.  If $f: \RR^n \rightarrow \RR$, then
the average value of $f$ on the ball of radius $r$ around $x$ can be written as

$$ \frac{1}{|B_r|} \int_{B_r} f(x+y) dy. $$

\noindent The Hardy-Littlewood maximal function is defined by taking the supremum over $r$.

$$ Mf(x) = \sup_r  \frac{1}{|B_r|} \int_{B_r} |f(x+y)| dy. $$

\noindent Hardy and Littlewood proved that $\| M f \|_{L^p} \le C(p,n) \| f \|_{L^p}$ for all $p > 1$ but not for $p = 1$.

In the 1960s, Stein introduced a spherical maximal function \cite{Stsph}.  Suppose $f: \RR^n \rightarrow \RR$.  The average value of $f$ on the sphere of radius $r$ around $x$ can be written as

$$ \frac{1}{|S^{n-1}|} \int_{S^{n-1}} f(x + r \theta) d \theta. $$

\noindent The spherical maximal function is defined by taking the supremum over $r$:

\begin{equation} \label{sphmax} M_S f(x) := \sup_{r > 0} \frac{1}{|S^{n-1}|} \int_{S^{n-1}} |f(x + r \theta)| d \theta. \end{equation}

\noindent For $n \ge 3$, Stein proved that in $\RR^n$, $\| M_S f \|_{L^p} \le C(n,p) \| f \|_{L^p}$ for all $p > \frac{n}{n-1}$, but not for $p \le \frac{n}{n-1}$.  He conjectured that the same was true for $n=2$.  The case $n=2$ was proven by Bourgain in \cite{B8}.  

Stein's result was striking for the following reason.  A function $f \in L^p$ need only be defined almost everywhere.  It may be undefined or infinite on a lower-dimensional submanifold like a sphere.  So for a particular $x$ and $r$, the integral on the RHS of (\ref{sphmax}) may be infinite or undefined.  Nevertheless, if $f \in L^p$ for $p > \frac{n}{n-1}$, Stein showed that the spherical maximal function is actually defined for almost every $x$.  The curvature of the sphere is crucial in this estimate.  The spherical maximal function and the restriction conjecture were two fundamental connections between curvature and harmonic analysis that Stein investigated.

The spherical maximal function can be generalized by replacing the sphere by other curved submanifolds.  Many of the corresponding problems are still open.  After the sphere and circle, the next most fundamental case to look at is the case of the moment curve in $\RR^n$.  Here is the definition.  Consider the moment curve parametrized by $\gamma(t) = (t, t^2, t^3, ... , t^n)$.  We can build an averaging operator based on the moment curve as follows.  Suppose $f: \RR^n \rightarrow \RR$ and define

$$  A f(x) = \int_0^1 f( x + \gamma(t)) dt. $$

\noindent Geometrically, $A f(x)$ is the average value of $f$ on the translate of the moment curve starting at $x$.  Next we can consider different scalings of the moment curve.  Define

$$ A_r f(x) = \int_0^1 f (x + r \gamma(t)) dt. $$

\noindent Geometrically, $A_r f(x)$ is the average value of $f$ on a moment curve which has been scaled by a factor of $r$ and then translated to start at $x$.  Finally, we can define the helical maximal function by taking the maximum of these averages over different choices of $r$.

$$ M_{hel} f(x) := \sup_{r > 0} A_r f(x). $$

In analogy with the work of Stein and Bourgain on the circular maximal function, it is natural to ask when $\| M_{hel} f \|_{L^p(\RR^n)} \lesssim \| f \|_{L^p(\RR^n)}$.  
 In \cite{PS}, Pramanik and Seeger connected this problem (when $n=3$) to the decoupling problem for the cone, which Wolff had recently introduced in \cite{W4}.  In \cite{BD}, Bourgain and Demeter gave sharp estimates for the decoupling for the cone, but that by itself is not enough to give sharp estimates for the helical maximal function.  Recently, Ko-Lee-Oh \cite{KLO} and Beltran-Guo-Hickman-Seeger \cite{BGHS} independently proved the sharp $L^p$ estimate for the helical maximal function when $n=3$.
 
 \begin{theorem} (\cite{KLO} and \cite{BGHS}) For $p > 3$
  $\| M_{hel} f \|_{L^p(\RR^3)} \le C(p) \| f \|_{L^p(\RR^3)}$.
  
  If $p \le 3$, this estimate does not hold.
  
  \end{theorem}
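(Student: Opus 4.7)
The plan is to derive the positive part from a local smoothing estimate that is accessible via cone decoupling, and to prove sharpness by a focusing example. First I would linearize the supremum: by a dyadic decomposition in $r$ and the genuine scaling symmetry of the moment curve $\gamma(t) = (t, t^2, t^3)$, it suffices to bound $\sup_{r \in [1,2]} |A_r f|$. Applying a Sobolev embedding in the $r$ variable to $F(r) = A_r f(x)$ (in the spirit $W^{1/p, p}_r \hookrightarrow L^\infty_r$), this reduces to a mixed-norm estimate of local smoothing type,
$$
\| A_r f \|_{L^p_x(\RR^3) L^p_r([1,2])} + \| \partial_r A_r f \|_{L^p_x(\RR^3) L^p_r([1,2])} \le C \| f \|_{L^p(\RR^3)}.
$$
This is the standard maneuver that reduces maximal bounds for averages along curves and surfaces to fixed-scale operator bounds with an auxiliary $r$-integral.

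Next I would connect the problem to the cone in $\RR^3$. Writing $\widehat{\sigma_r}(\xi) = \int_0^1 e^{-2\pi i r \xi \cdot \gamma(t)} dt$ and running stationary phase in $t$, the phase $\xi \cdot \gamma(t) = \xi_1 t + \xi_2 t^2 + \xi_3 t^3$ has real critical points exactly where $\xi_2^2 \ge 3 \xi_1 \xi_3$, so the significant part of the multiplier concentrates near the quadric cone $\{\xi_2^2 = 3 \xi_1 \xi_3\}$. After a Littlewood-Paley decomposition $f = \sum_k P_k f$, the piece $A_r P_k f$ has spatial Fourier support essentially in a $2^{k/2}$-neighborhood of this cone, restricted to an annulus of radius $2^k$. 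This is exactly the geometric setup in which the Bourgain-Demeter decoupling theorem for the cone applies.

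Now I would run the decoupling-based local smoothing argument: plate-decompose the cone into $2^{k/2}$ tiles, use cone decoupling to control the mixed-norm $L^p_{x,r}$ size of $A_r P_k f$ by the square function of the plate pieces, and exploit the transversality of the associated wave packets (in the space-time variable $(x, r)$) to obtain a quantitative frequency gain of the form $2^{-k(1/p - \eps)}$. Summing the geometric series in $k$ converges precisely when $p > 3$, which matches the claim. The hardest step will be the honest passage from the $L^6$ decoupling inequality to this $L^p \to L^p_{x,r}$ local smoothing bound with the correct endpoint: it requires interpolation against trivial $L^2$ and $L^\infty$ bounds, careful handling of the plate/wave-packet geometry, and control of the derivative term $\partial_r A_r f$ (which costs one extra factor of frequency, nearly absorbed by a slightly smaller gain). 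This is the heart of both \cite{KLO} and \cite{BGHS}.

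For the sharpness when $p \le 3$, I would use a Knapp-type focusing example: take $f$ to be a suitably weighted characteristic function of a thin tube around a translate of the moment curve (or a superposition of $N$ such tubes at scales dictated by the cone geometry). A direct computation shows that for many base points $x$ one can choose a dilation $r$ so that $x + r\gamma([0,1])$ stays inside the tube, forcing $M_{hel} f(x)$ to be large on a set whose measure exceeds what $\| f \|_{L^p}$ should allow as soon as $p \le 3$. The critical exponent $p = 3$ is exactly the critical exponent for local smoothing along the cone in $\RR^3$ after accounting for the additional $r$-average, which is what fixes the threshold in the theorem.
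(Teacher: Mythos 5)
Your outline is essentially the Pramanik--Seeger program: reduce to $r\in[1,2]$, apply a Sobolev embedding in $r$, observe via stationary phase that the bad part of the multiplier lives near the cone $\{\xi_2^2=3\xi_1\xi_3\}$, and then invoke cone decoupling. This survey does not prove the theorem (it cites \cite{KLO} and \cite{BGHS}), but it explicitly flags the point your sketch elides: Bourgain--Demeter's sharp cone decoupling ``by itself is not enough to give sharp estimates for the helical maximal function.'' Indeed, the route you describe (\cite{PS} plus the decoupling of \cite{BD}) is known to give $L^p$ boundedness only for $p>4$. The correct accounting is: after the Sobolev step, since $\partial_r$ costs a full factor $2^k$, you need a local smoothing bound $\|A_rP_kf\|_{L^p_{x,r}}\lesssim 2^{-k\sigma}\|f\|_{L^p}$ with $\sigma>1/p$; your claimed gain $2^{-k(1/p-\eps)}$ is on the wrong side of this threshold, and the assertion that the sum over $k$ ``converges precisely when $p>3$'' does not follow from it. The number $3$ enters because the best possible $\sigma$ is capped at $1/3$ by the Knapp box adapted to the curve (dimensions $\mu\times\mu^2\times\mu^3$ with $\mu=2^{-k/3}$), and $1/3>1/p$ forces $p>3$; the entire content of \cite{KLO} and \cite{BGHS} is that smoothing of order exceeding $1/p$ actually holds in the whole range $3<p\le 4$, and this requires substantially more than cone decoupling --- sharp local smoothing for the curve averages proved via multilinear (trilinear) estimates exploiting the nonvanishing torsion, induction on scales, and a refined wave-packet analysis. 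Compressing that into ``the hardest step\ldots requires interpolation and careful handling of the plate geometry'' is not a proof strategy; it is the missing heart of the argument.

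Two further problems. First, the statement that $A_rP_kf$ ``has spatial Fourier support essentially in a $2^{k/2}$-neighborhood of this cone'' is false as written: the Fourier support is the full annulus $|\xi|\sim 2^k$; what is true is that the multiplier decays like $|\xi|^{-1/2}$ away from the cone and only like $|\xi|^{-1/3}$ near it, so one must decompose dyadically in the distance to the cone before any decoupling is applied. Second, the sharpness claim is asserted rather than computed. The natural single-tube or focusing examples (characteristic functions of $\delta$-balls, of $\delta$-tubes around tangent lines, or of $\delta$-neighborhoods of translates/dilates of the curve) only rule out $p<2$: in each case the set where $M_{hel}f$ is large has measure too small to contradict boundedness for $2\le p\le 3$, as a direct computation shows. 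The examples that establish failure for all $p\le 3$ exploit higher-order tangency with the two-parameter family of dilated curves (the surface they sweep out and its osculating structure), and are genuinely more delicate; you would need to exhibit such a configuration explicitly rather than appeal to ``a Knapp-type focusing example'' and a threshold read off from cone local smoothing, whose critical exponent in $\RR^{3}$ is $4$, not $3$.
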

  
  The case of higher dimensions remains open, although both groups have proven interesting estimates on helical averages in other dimensions as well.
 
\subsection{Pointwise convergence for the Schrodinger equation} \label{subseccarleson}

Consider the initial value problem for the linear Schrodinger equation in $\RR^d \times \RR$:

$$ \partial_t u(x,t) = i \triangle u(x,t), u(x, 0) = u_0(x). $$

\noindent We can write down the solution $u$ with the help of the Fourier transform.  If the initial data $u_0$ is rough, then the solution $u(x,t)$ will be rough also.  In this situation, $u(x,t)$ will solve the differential equation in a distributional sense, even if $u(x,t)$ is discontinuous.

Carleson \cite{Ca} raised the following problem.  

\begin{ques} What is the smallest $s$ so that whenever $u_0 \in H^s(\RR^d)$ and $u(x,t)$ is a distributional solution to the Schrodinger equation on $\RR^d \times \RR$ with initial data $u_0(x)$, then $\lim_{t \rightarrow 0} u(x,t) = u_0(x)$ for almost every $x \in \RR^d$?  
\end{ques}

\noindent This question helps to describe how regular distributional solutions to the Schrodinger equation are.  This question is actually a cousin of the restriction problem and the Strichartz estimate, although we will have to rewrite it a little bit to see how they are connected.

Because $u$ solves the Schrodinger equation, the space-time Fourier transform $\hat u$ is supported on the infinite paraboloid.  One has to prove some estimates about how badly $u(x,t)$ oscillates for small $t$.  After some standard arguments (scaling and Littlewood-Paley), one can reduce these estimates to the case that $\hat u$ is supported on the truncated paraboloid $P$ and normalize so that $\| u_0 \|_{L^2(\RR^d)}=1$.  Now consider $U_{\lambda} (u) \subset \RR^d \times \RR$.  The Strichartz estimates give sharp bounds for $| U_\lambda(u)|$ in terms of $\lambda$.  A small variation gives sharp estimates for $|U_\lambda(u) \cap [0,R]^{d+1}|$ in terms of $\lambda$ and $R$.  Now let $\Pi_{\RR^d}(x,t) = x$ be the projection from space-time to space.  Carleson's pointwise convergence problem is related to the following question about the size of $\Pi_{\RR^d} ( U_\lambda(u) )$:

\begin{ques} Suppose that $\hat u$ is supported on the truncated paraboloid $P$.  Let $u_0(x) = u(x,0)$, and suppose that $\| u_0 \|_{L^2(\RR^d)} = 1$.  For any given $\lambda$, $R$, estimate the maximum possible size of  $| \Pi_{\RR^d}(U_\lambda u \cap [0,R]^{d+1}) |$. 
\end{ques}

\noindent The key difference between this problem and the Strichartz inequality is we have to estimate the $d$-volume of the projection of $U_\lambda (u)$ instead of the $(d+1)$-volume of $U_\lambda(u)$ itself.  This general problem is still open.  However we do understand a special case, which is sufficient to resolve the pointwise convergence problem.  Here is the special case:

\begin{ques} Suppose that $\hat u$ is supported on the truncated paraboloid $P$.  Let $u_0(x) = u(x,0)$, and suppose that $\| u_0 \|_{L^2(\RR^d)} = 1$.  
Suppose that $| \Pi_{\RR^d}(U_\lambda u \cap [0,R]^{d+1}) | \ge c R^d$.  How big can $\lambda$ be?  
\end{ques}

As a first example, suppose that $u_0$ is a smooth bump function approximating a constant function on $[0,R]^d$.  Because $\|u_0\|_{L^2} = 1$, we have $|u_0(x)| \sim R^{-d/2}$ on most of $[0,R]^d$.  In this case, $u(x,t)$ is roughly constant on $[0,R]^{d+1}$, and so $\lambda$ is also $\sim R^{-d/2}$.  

This first example is not the worst case.  In case $d=1$, the worst case example was found by Dahlberg-Kenig \cite{DK}.  It is given when $u(x,t)$ is a single wave packet, essentially supported on a tilted rectangle with dimensions $R^{1/2} \times R$. 

% Picture!

\noindent In this case, $u_0(x)$ is essentially supported on an interval of length $R^{1/2}$, and so $|u_0(x)| \sim R^{-1/4}$ on this interval.  Then $|u(x,t)| \sim R^{-1/4}$ on the whole wave packet, and we get $\lambda \sim R^{-1/4}$.  Carleson \cite{Ca} had showed previously that this value of $\lambda$ is optimal.  This settles Carleson's problem in the case $d=1$, but the case of higher dimensions was open for 30+ years.

In higher dimensions, we can adapt the Dahlberg-Kenig example by taking many parallel wave packets with disjoint projections onto $\RR^d$.  This gives $\lambda = R^{-\frac{d}{2} + \frac{1}{4}}$.  
For a long time, it seemed plausible that this construction was sharp in any dimension.  In the last decade, mathematicians found other much more intricate examples.  The first was given by Bourgain \cite{B9} and there were several improvements leading up to \cite{B10} (cf. also \cite{LR}).  The last example gives $\lambda = R^{-\frac{d}{2} + \frac{d}{2d+2}}$.

This last example turns out to be sharp.  The case $d=2$ was proven in \cite{DGL} and the case of all $d$ was proven in \cite{DZ}.  Even for $d=2$, the proof in \cite{DZ} is simpler.  The key ingredient in these proofs is decoupling.  Decoupling is applied in a somewhat indirect way.  In particular, the proofs use decoupling many times at different scales.  

\begin{theorem} (\cite{DZ}, \cite{B10}) Suppose that $s > \frac{d}{2d+2}$.  If $u_0 \in H^s(\RR^d)$, and $u(x,t)$ is a (distributional) solution to the linear Schrodinger equation with initial data $u_0$.  Then $\lim_{t \rightarrow 0} u(x,t) = u_0(x)$ for almost every $x$.  

Suppose that $s < \frac{d}{2d+2}$.  There exists a function $u_0 \in H^s(\RR^d)$ with the following bad behavior.  Let $u(x,t)$ be the (distributional) solution to the linear Schrodinger equation with initial data $u_0$.  For this function, $\limsup_{t \rightarrow 0} |u(x,t)| = + \infty$ for almost every $x \in \RR^d$.  

\end{theorem}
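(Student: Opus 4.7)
The plan is to follow the reduction sketched in the excerpt: convergence for $u_0 \in H^s$ with $s > \frac{d}{2d+2}$ is equivalent, via a standard maximal-function/Borel--Cantelli argument together with Littlewood--Paley and rescaling, to the following quantitative claim. If $\hat u$ is supported on the truncated paraboloid $P$ and $\|u_0\|_{L^2(\RR^d)}=1$, and if $|\Pi_{\RR^d}(U_\lambda(u) \cap [0,R]^{d+1})| \ge \alpha R^d$ for some fixed $\alpha > 0$, then $\lambda \le C_{\alpha,\eps} R^{-\frac{d}{2d+2} + \eps}$ for every $\eps > 0$. The upper direction of the theorem is equivalent to proving this estimate, which matches Bourgain's example \cite{B10} up to $R^{\eps}$.

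To prove the projection estimate I would combine the sharp decoupling theorem for $P$ with induction on scales, in the same spirit as Section \ref{secindscales}. Wave-packet-decompose $u = \sum_\theta u_\theta$ at the scale $R^{-1/2}$, so that each $u_\theta$ concentrates on tubes $T_\theta$ of dimensions $R^{1/2} \times \cdots \times R^{1/2} \times R$ in space-time pointing normal to $\theta \subset P$. The crucial geometric fact is that the $\RR^d$-projection of a single tube has $d$-volume $\sim R^{d/2}$, while tubes with different $\theta$ point in different space-time directions and therefore project onto $\RR^d$ tubes that are transverse to one another. Run a broad/narrow dichotomy on each ball of an intermediate scale $R^{1/2} < r < R$: in the broad case many $u_\theta$ with genuinely different directions all contribute at the same point, and one applies the decoupling inequality for $P$ to the ball together with local orthogonality to bound the $L^p$ mass, then converts the $L^p$ bound into a bound on the relevant projected super-level set using the wave-packet structure; in the narrow case the wave packets cluster near a lower-dimensional variety, and one passes to the induction hypothesis after rescaling. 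Iterating through the chain of dyadic scales $R, R^{1/2}, R^{1/4}, \ldots$ and tracking the exponents gives the sharp bound $\lambda \lesssim R^{-\frac{d}{2d+2} + \eps}$.

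The counterexample direction for $s < \frac{d}{2d+2}$ is separate and does not use decoupling. I would follow Bourgain's construction in \cite{B10}: build $u_0$ as a superposition of modulated Gaussians whose frequencies are chosen from a Weyl-sum arithmetic pattern (close in spirit to Example \ref{fundtorus}), designed so that the Schrödinger evolution $u(x,t)$ focuses near rational points and achieves $|u(x,t)| \sim R^{-\frac{d}{2} + \frac{d}{2d+2}}$ on a set of projection-measure $\sim R^d$ at time scale $t \sim R^{-2}$. Arranging such peaks at a rapidly increasing sequence of scales $R_k \to \infty$ and summing with carefully decaying $H^s$ weights produces a single $u_0 \in H^s$ whose evolution has $\limsup_{t \to 0} |u(x,t)| = +\infty$ almost everywhere by the second Borel--Cantelli lemma.

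The main obstacle is precisely the passage from a bound on $|U_\lambda(u)|$ to a bound on its projection $|\Pi_{\RR^d} U_\lambda(u)|$. Decoupling as stated in Theorem \ref{decpar} controls space-time $L^p$ norms and hence space-time super-level set volumes, but it does not distinguish wave packets whose tubes happen to project onto the same $d$-dimensional region. Overcoming this requires what is now called a \emph{refined} decoupling/Strichartz estimate, in which the $L^p$ norm is replaced by a sum over a carefully chosen subfamily of wave packets whose directions are well separated; such a refinement can be extracted from decoupling by iterating at every scale and exploiting at each scale the transversality of wave packets in different $\theta$-directions, exactly as in the proof strategy discussed in Subsection \ref{subsecindplustrans}. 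As in the proof of decoupling itself, the strength of the argument does not come from applying a single estimate once but from combining transversality information at every scale simultaneously.
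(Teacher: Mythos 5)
The statement you are proving is not proved in this survey at all: it is quoted from \cite{DZ} and \cite{B10}, and the text around it only records the reduction to the projection question and the remark that decoupling enters ``many times at different scales.'' Your proposal captures that architecture correctly, but as a proof it has genuine gaps. First, a concrete error in your reduction: with the paper's normalization ($\hat u$ on the unit-scale truncated paraboloid, $\| u_0 \|_{L^2} = 1$), the sharp threshold is $\lambda \sim R^{-\frac{d}{2} + \frac{d}{2d+2}}$, which is what you yourself use in the counterexample paragraph; your claimed quantitative statement $\lambda \le C_{\alpha,\eps} R^{-\frac{d}{2d+2}+\eps}$ drops the factor $R^{-d/2}$ and so does not even match the example it is supposed to be sharp against. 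Second, and more seriously, the heart of the matter --- converting space-time $L^p$/decoupling information into control of the spatial projection $\Pi_{\RR^d}(U_\lambda(u))$, i.e.\ the maximal estimate --- is named as ``the main obstacle'' but not actually overcome. The broad/narrow-plus-decoupling scheme you describe, run as written, bounds the space-time measure $|U_\lambda(u)|$ and yields only a Strichartz-type threshold $\lambda \lesssim R^{-\frac{d^2}{2(d+2)}}$, which is strictly weaker than the needed $R^{-\frac{d^2}{2(d+1)}}$. Invoking a ``refined decoupling extracted by iterating at every scale'' is a description of what \cite{DZ} accomplishes, not an argument: in Du--Zhang the inductive quantity is a weighted ($\alpha = n-1$ Frostman) $L^2$ restriction estimate, the concentration of the spatial set in balls is carried as an explicit parameter through the induction, lower-dimensional decoupling is needed for the flat/narrow contributions, and for $d=2$ the earlier proof \cite{DGL} even required polynomial partitioning. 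The exponent bookkeeping you defer to ``iterating and tracking'' is precisely where the theorem lives.

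On the divergence direction, the outline of Bourgain's construction is reasonable, but the final step is not: the peaks produced at the scales $R_k$ are not independent events, so the second Borel--Cantelli lemma does not apply as stated. The standard routes are either to combine the construction with the Nikishin--Stein maximal principle (failure of the weak-type maximal inequality for $s < \frac{d}{2d+2}$ already yields a.e.\ divergence for some $H^s$ datum) or to run the more careful almost-everywhere argument of \cite{B10} and \cite{LR}, in which divergence is first arranged on sets of measure bounded below and then upgraded by a limiting procedure. As it stands, then, your write-up is a correct map of the known proof strategy with one wrong exponent and the two hard analytic steps left as black boxes.
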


\subsection{The local smoothing problem}

Wolff introduced decoupling in his work on the local smoothing problem \cite{W4}.  This problem is an estimate about solutions to the wave equation.

Suppose that $u(x,t)$ solves the wave equation $\partial_t^2 u = \triangle u$, with $x \in \RR^d$ and $t \in \RR$, and with initial data $u(x,0) = u_0(x)$ and $\partial_t u(x,0) = u_1(x)$.  The local smoothing problem concerns Sobolev-type bounds for the wave equation:  Given bounds on some Sobolev norms of $u_0$ and $u_1$, what bounds can we prove on the Sobolev norms of $u$?  

To make things simple and concrete, let us suppose that $u_1 = 0$ and that $\hat u_0$ is supported in a ball of radius $N$ in frequency space.  Then we would like to find all bounds of the form

$$ \| u(x,t) \|_{L^p(\RR^d \times [0,1])} \le C N^\alpha \| u_0(x) \|_{L^p(\RR^d)}. $$

The word local in local smoothing refers to the time interval $[0,1]$.  A global estimate would give a bound on $\RR^d \times \RR$, whereas a fixed-time estimate would give a bound for $\RR^d \times \{ t_0 \}$ for some fixed $t_0$ (such as $t_0 = 1$).  Global in time estimates, local in time estimates, and fixed time estimates are all interesting.  Sharp fixed time estimates were established by Peral \cite{P} and Miyachi \cite{M} around 1980.  The word `smoothing' in local smoothing is because the power of $\alpha$ in the local in time estimates is smaller than the power in a fixed time estimate.

In \cite{Sog}, Sogge formulated the local smoothing conjecture, and he proved the first local smoothing estimates improving upon the $\alpha$ given by the fixed time estimates.

\begin{conj} \label{locsmooth} (\cite{Sog}) Suppose $d \ge 2$.  Suppose that $u(x,t)$ solves the wave equation in $\RR^d \times \RR$, with initial data $u(x,0) = u_0(x)$ and $\partial_t u(x,0) = 0$.  Suppose that $\hat u_0$ is supported in the ball of radius $N$.  Then, if $2 \le p \le \frac{2d}{d-1}$, then 

$$ \| u(x,t) \|_{L^p(\RR^d \times [0,1])} \le C(d, \eps) N^\eps \| u_0 \|_{L^p(\RR^d)}. $$

If $p > \frac{2d}{d-1}$, then

$$ \| u(x,t) \|_{L^p(\RR^d \times [0,1])} \le C(d, \eps) N^{\frac{d-1}{2} - \frac{d}{p} + \eps} \| u_0 \|_{L^p(\RR^d)}. $$

\end{conj}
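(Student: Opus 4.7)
The plan is to transfer the problem to a question about functions with spacetime Fourier transform supported on the light cone in $\RR^{d+1}$, and then apply a sharp decoupling theorem for the cone analogous to Theorem \ref{decpar}. By Littlewood-Paley it suffices to treat $\hat u_0$ supported in a dyadic annulus $|\omega| \sim N$; since $\partial_t u(\cdot, 0) = 0$, write
$$u(x,t) = \tfrac{1}{2}\int_{\RR^d} \hat u_0(\omega)\bigl(e^{2\pi i(\omega \cdot x + t|\omega|)} + e^{2\pi i(\omega \cdot x - t|\omega|)}\bigr)\, d\omega,$$
so that the spacetime Fourier transform of $u$ is supported on the truncated cone $\Gamma_N = \{(\omega, \pm|\omega|) : |\omega| \sim N\} \subset \RR^{d+1}$, a $d$-dimensional surface with $d-1$ nonzero principal curvatures. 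Cover $\Gamma_N$ by $\sim N^{(d-1)/2}$ caps $\theta$ of angular width $N^{-1/2}$ (each an extrinsic slab of approximate dimensions $N \times N^{1/2} \times \cdots \times N^{1/2} \times 1$) and decompose $u = \sum_\theta u_\theta$.

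The next step is to invoke the sharp cone decoupling theorem of Wolff and Bourgain-Demeter (the direct cone analogue of Theorem \ref{decpar}), valid for $2 \le p \le p_c$ with $p_c = \frac{2(d+1)}{d-1}$, which gives
$$\|u\|_{L^p(\RR^d \times [0,1])}^{2} \le C_\eps N^{\eps} \sum_{\theta} \|u_\theta\|_{L^p(\RR^d \times [0,1])}^{2}.$$
Each $u_\theta$ is essentially a single wave packet supported on a tube dual to $\theta$, and a standard wave-packet and Bernstein analysis controls $\|u_\theta\|_{L^p(\RR^d \times [0,1])}$ by $\|u_0^{\theta}\|_{L^p(\RR^d)}$ with the sharp power of $N$ dictated by the cap geometry. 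The Littlewood-Paley/Minkowski embedding $(\sum_\theta \|u_0^\theta\|_{L^p}^2)^{1/2} \lesssim \|u_0\|_{L^p}$, valid for $p \ge 2$, assembles this into the conjectured bound at $p = p_c$. Interpolating with the sharp fixed-time Peral-Miyachi estimate at $p = \infty$ then yields $N^{\frac{d-1}{2} - \frac{d}{p} + \eps}$ for every $p \ge p_c$, establishing the conjecture throughout that range.

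The main obstacle is the remaining range $2 \le p < p_c$, especially the critical exponent $p = \frac{2d}{d-1}$, where the conjecture predicts an $N^\eps$ bound that is strictly stronger than what interpolation between $L^2$ conservation and the cone decoupling bound at $p_c$ can deliver. In this range the conjecture is effectively a statement about how tubes dual to different caps $\theta$ can overlap in $\RR^{d+1}$, and it is intimately intertwined with the Kakeya conjecture in dimension $d+1$ in the sense of Section \ref{seckakeya}; cone decoupling alone cannot yield the sharp exponent there. I expect this low-$p$ range to be the hardest part, requiring an enhanced multi-scale transversality argument of the form developed in Subsection \ref{subsecindplustrans} applied to caps of the cone at all scales simultaneously, together with genuinely new Kakeya-type input that goes beyond what is currently known.
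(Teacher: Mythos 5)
You have not proven this statement, and you could not have: in the paper it is Sogge's local smoothing \emph{conjecture}, stated without proof. The survey records exactly what is known: Wolff \cite{W4} introduced decoupling to get the case $d=2$, $p>74$; the Bourgain--Demeter cone decoupling \cite{BD} yields the conjecture precisely in the range $p > \frac{2(d+1)}{d-1}$; the case $d=2$ was settled in \cite{GWZ} by a sharp square function estimate for the cone (influenced by, but not a consequence of, decoupling); and since local smoothing in dimension $d$ implies the Kakeya conjecture in dimension $d$ (Wolff, adapting Fefferman \cite{F2}), the full conjecture is out of reach for every $d \ge 3$. Your proposal, as you yourself concede in the last paragraph, only addresses the range $p \ge \frac{2(d+1)}{d-1}$ and leaves the entire range $\frac{2d}{d-1} \le p < \frac{2(d+1)}{d-1}$ --- which is the actual content of the conjecture, since the supercritical range follows from the critical exponent $p=\frac{2d}{d-1}$ --- as an acknowledged open problem. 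So this is a description of the known partial result plus a statement that the rest is hard, not a proof.

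Even in the range you do claim, one step is wrong as written: the ``Littlewood--Paley/Minkowski embedding'' $(\sum_\theta \|u_0^\theta\|_{L^p}^2)^{1/2} \lesssim \|u_0\|_{L^p}$ fails for $p>2$. If $u_0=\sum_\theta g_\theta$ with $g_\theta$ Fourier-supported in the sector $\theta$ but concentrated at widely separated points, the left side is comparable to $(\#\theta)^{1/2}$ while the right side is comparable to $(\#\theta)^{1/p}$; moreover, were the inequality true, combining it with $\ell^2$ decoupling at $p_c=\frac{2(d+1)}{d-1}$ would give an $N^\eps$ bound at $p_c$, contradicting the focusing example that forces the exponent $\frac{d-1}{2}-\frac{d}{p}$ for $p>\frac{2d}{d-1}$. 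The correct bookkeeping is to pass from $\ell^2$ to $\ell^p$ by H\"older in $\theta$, losing $(\#\theta)^{\frac12-\frac1p} = N^{\frac{d-1}{2}(\frac12-\frac1p)}$, and to use $(\sum_\theta \|u_0^\theta\|_{L^p}^p)^{1/p} \lesssim \|u_0\|_{L^p}$ (interpolate between $p=2$ and $p=\infty$); at $p=p_c$ the loss $N^{\frac{d-1}{2(d+1)}}$ matches the conjectured exponent $\frac{d-1}{2}-\frac{d}{p_c}$ exactly, and interpolation with the fixed-time Peral--Miyachi bounds covers $p>p_c$. With that repair your first two paragraphs reproduce the known consequence of cone decoupling cited in the paper; the conjecture itself remains open exactly where you say it does, and for $d\ge 3$ any proof must in particular resolve Kakeya in dimension $d$.
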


The case $p = \frac{2d}{d-1}$ is the critical exponent, and it implies all the other estimates for a given dimension $d$.  In \cite{W4}, Wolff introduced decoupling and used it to show that Conjecture \ref{locsmooth} holds when $d=2$ and $p > 74$.  Wolff also observed that the local smoothing conjecture in dimension $d$ implies the Kakeya conjecture in dimension $d$, by adapting Fefferman's argument from \cite{F2}.  Therefore, the full conjecture remains out of reach for all $d \ge 3$.

In \cite{BD}, Bourgain and Demeter proved a complete decoupling theorem for the cone.  This implies that Conjecture \ref{locsmooth} holds in $\RR^d$ for all $p > \frac{2(d+1)}{d-1}$.  In particular, when $d=2$, local smoothing holds for all $p > 6$.  When $d=2$, the critical exponent for local smoothing is $p=4$.  

In \cite{GWZ}, Wang, Zhang, and I proved the local smoothing conjecture when $d=2$ for $p=4$ (and hence for all $p$).  The proof of local smoothing does not use decoupling per se, but it is strongly influenced by the ideas in the proof of decoupling, including induction on scales.

\section{Frustrations, limitations, and open problems}

Decoupling and the ideas in the proof of decoupling have led to solutions of many problems that seemed out of reach a decade ago.  The proof is elegant in some ways.  In some ways, it feels like a proof `from the book'.  It is essentially self-contained and it is not that long.  But in other ways the proof is frustrating.  (Actually, there are now several proofs, and they have various advantages and disadvantages.  The community is actively trying to understand decoupling from different angles, and in five or ten years, we may have a different sense of the essential ingredients.)

In this section, I discuss some of my frustrations with the proof of decoupling, some limitations of the method, and some open problems.

\subsection{Too much induction} On one hand, induction on scales is the central idea in the proof of decoupling.  On the other hand, the heavy reliance on induction makes the proof difficult to read.  A lot of important stuff is happening inside the induction.  

For example, as we discussed in subsection \ref{subsecindplustrans}, I think that the leverage in the proof of decoupling comes from taking advantage of the transversality of wave packets of every scale, not just at a few scales.  For instance, suppose we cut the parabola $P$ into $M$ rectangles $\tau$ with $M = N^{5/16}$.  The proof of decoupling takes advantage of the transversality between the wave packets at this scale, but it is not easy to locate the place in the argument where this transversality is used because it is a little bit buried in the induction.  Even though I have thought through the proof many times, it took me a good while to locate where wave packets at this particular scale are used.

Reading through the full proof of decoupling for the paraboloid, we see many different tricks for taking advantage of induction on scales.  Loomis-Whitney is used at many scales to prove multilinear Kakeya.  Multilinear Kakeya is used at many scales in the argument in Subsection \ref{subsectransver}.  The key induction on scales is described in Subsection \ref{subsecindplustrans}.  Induction on scales is also used in a different way to go back and forth between multilinear estimates and the original linear estimates, as we discussed in Subsection \ref{subsecmulti}.  Finally, many applications of decoupling actually use decoupling many times at different scales, as in Subsection \ref{subseccarleson}.

We might look at this and feel that using multiple scales is a craft with many aspects.  But we might also start to get the feeling that this is too many different tricks, and that we should try to take advantage of many scales in a more systematic way.

\subsection{What does decoupling say about the shapes of superlevel sets?}

Decoupling gives an estimate for $\| f \|_{L^p}$ or for the measure of the superlevel sets $U_\lambda(f)$.  Besides the measure of the sets $U_\lambda(f)$, decoupling also seems to be connected to the shape of the superlevel sets $U_\lambda(f)$.  Looking back through our discussion in Subsections \ref{subsectransver} and \ref{subsecindplustrans}, the shape of $U_\lambda(f)$ plays an important role, even though the final estimate only concerns the measure of $U_\lambda(f)$.  In particular, during the argument, we make use of some information about $| U_\lambda(f_\tau) \cap B|$ for various balls $B$ and for various $\tau$.  This information roughly describes how much the set $U_\lambda(f)$ can concentrate in balls.  The shape of $U_\lambda(f)$ is also connected to some applications of decoupling, such as the work on Carleson's pointwise convergence problem discussed in Subsection \ref{subseccarleson}.

Perhaps the shape of $U_\lambda(f)$ should be a more central character in decoupling.  What is the full information about the shape of $U_\lambda(f)$ which the proof method of decoupling gives?
Unfortunately this question is quite vague.  There are many possible ways we could describe the shape of $U_\lambda(f)$, and it's not clear which language to use.  But it is possible that discussing the shape of $U_\lambda(f)$ systematically throughout the whole story might make the arguments clearer or even stronger...

Here is one question from the harmonic analysis literature that has to do with the shape of $U_\lambda(f)$.  We consider a measure $\mu$ supported on a large ball $B_R \subset \RR^n$ which obeys the Frostman condition

\begin{equation} \label{frostman} \mu( B_r(x) ) \le r^\alpha \end{equation}

\noindent Here $0 < \alpha < n$ is fixed.  

\begin{ques} As in the restriction problem or the Strichartz inequality, suppose that $f: \RR^n \rightarrow \CC$ is given by 

$$ f(x) = \int_P a(\omega) e^{2 \pi i \omega x} d \mu_P(\omega). $$

For a given $n$ and $\alpha$, what is the best exponent $\gamma$ in the inequality

$$ \| f \|_{L^2(d \mu)} \le C R^\gamma \| a \|_{L^2(P)}, $$

\noindent among all functions $f$ as above and all measures $\mu$ obeying the Frostman condition (\ref{frostman}) with exponent $\alpha$.

\end{ques}

In dimension $n=2$, this question is well understood for all $\alpha$ by work of Mattila and Wolff, cf. \cite{W6}.  But for $n \ge 3$, the problem is far from understood.  In \cite{DZ}, Du and Zhang gave a sharp answer for $\alpha = n-1$.  No other cases are fully understood.  The Du-Zhang estimate for $\alpha = n-1$ is closely related to the solution of Carleson's problem on pointwise convergence for solutions of the Schrodinger equation.  Decoupling and multilinear restriction are the essential tools in their approach, and they use decoupling at many different scales.  

How much can the method of decoupling tell us about other values of $\alpha$?  Is there anything fundamentally special about $\alpha = n-1$?  Also the Frostman condition (\ref{frostman}) can be replaced by other conditions, by replacing the function $r^\alpha$ by other functions of $r$.  This would lead to other kinds of estimates about the shape of $U_\lambda(f)$.

\subsection{Limitations of the information used in the proof}

In the statement of decoupling, we assume that $\hat f$ is supported in $\Omega$, and we try to bound $\| f \|_{L^p}$ in terms of some information about $\| f_\theta \|_{L^p}$ for all the $\theta$ in the decomposition of $\Omega$.  If we look through the proof and check where the hypothesis $\supp (\hat f) \subset \Omega$ is used, we find that it is used only in fairly simple ways.

In the course of the proof, we consider $f_\tau$ for many different rectangles $\tau$.  The proof relies crucially on two facts.  The first is the locally constant heuristic: 

\begin{equation} \label{locconst'}  \textrm{ For each $\tau$,  $|f_\tau|$ is approximately constant on each translate of $\tau*$.} \end{equation}

The second is the local orthogonality heuristic.  If $\tau$ is a rectangle, and $\gamma$ are smaller rectangles contained in $\tau$, and if non-adjacent $\gamma$ are separated by at least $s$, then 

\begin{equation} \label{locorth} \int_{B} |f_\tau|^2 \approx \sum_{\gamma \subset \tau} \int_B |f_\gamma|^2, \end{equation}

\noindent whenever $B$ is a cube whose side length is longer than $s^{-1}$.  (The proof of decoupling also involves some linear changes of variables.  A ball $B$ in the new variables might correspond to an ellipsoid in the original variables.)

The Fourier support properties of the different functions $f$, $f_\tau$, $f_\theta$ are only really used to justify these two heuristics.  These two heuristics are consequences of the Fourier support hypotheses, but they don't encode all the information given by the Fourier support hypotheses.

This raises the question: which theorems of restriction theory can we prove only using the locally constant heuristic and local orthogonality?  Which theorems require us to use the Fourier support hypothesis in some other way?

The proofs of the different decoupling theorems essentially only use these two properties.  (I say essentially because some of the proofs also involve some pigeonholing of wave packets.)  Also, the strongest current work on the restriction conjecture only uses these two properties.  It's possible that the full restriction conjecture might follow only using these two properties.

 In restriction theory there are currently very few examples of techniques for exploiting the Fourier support of $f$ that use Fourier support information in some other way.  (One example is to use the even integer trick, Lemma \ref{eventrick}, together with number theoretic input.  An interesting recent example of this approach is the work on Strichartz-type estimates for the periodic Airy equation by Hughes-Wooley \cite{HW}.). 

However, there are a number of problems in restriction theory where I strongly doubt that these two properties are sufficient to give full answers.  One example is the the problem of estimating the $L^p$ norms of the functions

$$  f_{k,N}(x) = \sum_{a = 1}^N e^{2 \pi i a^k x} .$$

\noindent As we discussed in Section \ref{analnumb}, the $L^p$ norms of $f_{k,N}$ are well understood for $k=2$ and wide open for $k \ge 3$.  When $k=2$, the different proofs all use some information besides the locally constant heuristic and local orthogonality.  I believe the sharp estimates for $k=2$ cannot be proven by an argument using only those two properties.

There is an interesting generalization of this $L^p$ problem which I think is a good test case for going beyond the locally constant property and local orthogonality.  As we mentioned in Section \ref{analnumb}, $\| f_{2,N} \|_{L^4([0,1])} \le C_\eps N^{1/2 + \eps}$.  

\begin{ques} \label{genf2N} We consider a sequence of frequencies $\omega_a$, with $a = 1, ... N$, which behave approximately like the squares $a^2$, in the sense that

$$ \omega_{a+1} - \omega_a \sim a, $$

$$ (\omega_{a+1} - \omega_a) - (\omega_{a} - \omega_{a-1}) \sim 1. $$

For such a choice of frequencies $\omega_a$, define

$$ f(x) = \sum_{a=1}^N e^{2 \pi i \omega_a x}. $$

Estimate $\| f \|_{L^4([0,1])}$.  Is it true that $\| f \|_{L^4([0,1])} \le C_\eps N^{1/2 + \eps}$?

\end{ques}

 As far as I know, it is possible that $\| f \|_{L^4([0,1])} \le C_\eps N^{1/2+\eps}$ in this much more general setting.  However, the proofs that work for $f_{2,N}$ do not generalize to this setting.  And the method of decoupling can prove only limited things.  In \cite{FGM}, Fu, Maldague, and I explored how much we can say about this question using ideas of decoupling theory.  As part of that investigation, we explain the version of the locally constant property which appears in this setting, which goes back to Bourgain's work \cite{B12} on Montgomery's conjecture. The main theorems of \cite{FGM} give sharp $L^p$ estimates for much shorter sums: sums of length $\sim N^{1/2}$.  For these shorter sums, the locally constant property and the methods of decoupling are effective.  But for longer sums, they seem much less effective, and I believe that some different tools are needed.

Question \ref{genf2N} is also related to a question of Erd{\H o}s about sumsets of convex sets.  A sequence $\omega_a$ is called convex if $ (\omega_{a+1} - \omega_a) - (\omega_{a} - \omega_{a-1}) > 0$ for all $a$.  Notice that the set of frequencies in Question \ref{genf2N} is a convex sequence.  If $A$ is a convex sequence, then Erd{\H o}s conjectured that $|A+A| \ge c_\eps |A|^{2 - \eps}$.  Here $A+A$ denotes all sums of two elements of $A$.  This conjecture is open.  There is interesting recent work on it by Schoen and Shkredov, \cite{Sh}, who proved that $|A + A| \ge c_\eps  |A|^{1.6 - \eps}$.  This beats the previous best estimate $|A|^{1.5}$, which had stood for a long time.  If $A$ denotes the frequencies in Question \ref{genf2N}, and if indeed $\| f \|_{L^4([0,1])} \le C_\eps N^{1/2 + \eps}$, then it would follow that $|A + A| \ge c_\eps |A|^{2 - \eps}$.  The best bound I could prove using the methods of decoupling gives $|A+A| \ge c |A|^{1.5}$.  Work in combinatorics such as \cite{Sh} may give clues on how to go further in problems like Question \ref{genf2N}.  

We can also ask an analogous question about sequences of frequencies that behave roughly like $k^{th}$ powers.

\begin{ques} \label{genfkN}
Suppose that the function $\phi: [0,N] \rightarrow \RR$ behaves approximately like the function $t^k$ in the following sense.  Let $\phi^{(j)}$ denote the $j^{th}$ derivative of $\phi$.

\begin{itemize}

\item $0 = \phi(0) = \phi'(0) = ... = \phi^{(k-1)}(0)$.

\item $t^{k-1} \le \phi^{(k)}(t) \le 2 t^{k-1}$.

\end{itemize}

Let $\omega_a = \phi(a)$, and define

$$ f(x) = \sum_{a=1}^N e^{2 \pi i \omega_a x}. $$

Estimate $\| f \|_{L^{2k}([0,1])}$.  Is it true that $\| f \|_{L^{2k}([0,1])} \le C_\eps N^{1/2 + \eps}$?

\end{ques}

If the answer to this question is affirmative, then it would imply Conjecture \ref{HypK*}, which says that $\| f_{k,N} \|_{L^{2k}} \le C_\eps N^{1/2 + \eps}$, where $f_{k,N} = \sum_{a = 1}^N e^{2 \pi i a^k x}$.  An affirmative answer would show that not only this conjecture is true but the estimate is quite robust, and only depends on some basic analytic features of the sequence of frequencies $a^k$.  The proof of the Vinogradov mean value conjecture, Theorem \ref{vino}, is robust in this sense: it applies not just to the sequence of frequencies $(a, a^2, ..., a^k)$, but also to any sequence of frequencies with similar basic analytic features.    If the answer to Question \ref{genfkN} is negative, it will show that Conjecture \ref{HypK*}, if true, is not as robust as Theorem \ref{vino}.  This would show that any proof of Conjecture \ref{HypK*} must use some finer properties of the sequence $a^k$.  This would help to clarify the nature of the difficulty in this old problem.

Question \ref{genfkN} has barely been investigated, but it is related to some interesting recent work of Hanson-Roche-Newton-Rudnev \cite{HRR} on higher convexity and iterated sumsets.

\end{document}